\begin{document}

\title{Theoretical and Computable Optimal Subspace Expansions
for Matrix Eigenvalue Problems\thanks{Supported by the
National Natural Science Foundation of China (Nos. 11771249 and 12171273).}}

\author{Zhongxiao Jia\thanks{Department of Mathematical Sciences,
Tsinghua University, Beijing 100084, People's Republic of China,
{\sf jiazx@tsinghua.edu.cn}.}}
\date{}
\maketitle

\newtheorem{Def}{Definition}
\newtheorem{Exm}{Example}
\newtheorem*{Prob}{The optimal subspace expansion problem}
\newtheorem{Thm}{Theorem}[section]
\newtheorem{Lem}{Lemma}[section]
\newtheorem{Cor}{Corollary}[section]
\newtheorem{Alg}{Algorithm}
\newtheorem{Exp}{Experiment}
\newtheorem{Ass}{Assumption}
\newtheorem{Rem}{Remark}
\newtheorem{Pro}{Proposition}

\newcommand{\Span}{\mathrm{span}}
\newcommand{\xp}{\mathbf{x}_\perp}
\newcommand{\Xp}{X_\perp}
\newcommand{\sep}{\mathrm{sep}}
\newcommand{\imag}{\mathrm{i}}

\begin{abstract}
Consider the optimal subspace expansion problem
for the matrix eigenvalue problem $Ax=\lambda x$: Which vector $w$ in the
current subspace $\mathcal{V}$, after multiplied by $A$,
provides an optimal subspace expansion for approximating a desired eigenvector
$x$ in the sense that $x$ has the smallest angle with the expanded
subspace $\mathcal{V}_w=\mathcal{V}+{\rm span}\{Aw\}$, i.e.,
$w_{opt}=\arg\max_{w\in\mathcal{V}}\cos\angle(\mathcal{V}_w,x)$?
This problem is important as many iterative methods
construct nested subspaces that successively
expand $\mathcal{V}$ to $\mathcal{V}_w$. An expression of $w_{opt}$ by Ye
(Linear Algebra Appl., 428 (2008), pp. 911--918) for $A$ general,
but it could not be exploited
to construct a computable (nearly) optimally expanded subspace.
He turns to deriving a maximization characterization
of $\cos\angle(\mathcal{V}_w,x)$
for a {\em given} $w\in \mathcal{V}$ when $A$ is Hermitian.
We generalize Ye's maximization characterization to
the general case and find its maximizer. Our main contributions consist of
explicit expressions of $w_{opt}$, $(I-P_V)Aw_{opt}$ and the optimally
expanded subspace $\mathcal{V}_{w_{opt}}$ for $A$ general, where $P_V$ is
the orthogonal projector onto $\mathcal{V}$.
These results are fully exploited to obtain computable optimally
expanded subspaces within the framework
of the standard, harmonic, refined, and refined harmonic Rayleigh--Ritz methods.
We show how to efficiently implement
the proposed subspace expansion approaches. Numerical experiments
demonstrate the effectiveness of our computable
optimal expansions.

\begin{keywords}
Eigenvalue problem, non-Krylov subspace, optimal expansion vector,
optimal subspace expansion,
Ritz vector, refined Ritz vector, harmonic Ritz vector, refined
harmonic Ritz vector, computable optimally expanded subspace
\end{keywords}

\begin{AMS}
65F15, 15A18, 65F10.
\end{AMS}
\end{abstract}

\pagestyle{myheadings}
\thispagestyle{plain}
\markboth{ZHONGXIAO JIA}{OPTIMAL SUBSPACE EXPANSION FOR EIGENVALUE PROBLEMS}

\section{Introduction}
Consider the large scale matrix eigenproblem
$
Ax=\lambda x
$
with $A\in\mathcal{C}^{n\times n}$ and $\|x\|=1$, where $\|\cdot\|$ is the
2-norm of a vector or matrix. We are interested in an exterior
eigenvalue $\lambda$ and the associated eigenvector $x$. A number of
numerical methods have been available for solving this kind of
problem \cite{bai2000templates,parlett1998symmetric,
saad1992eigenvalue,vandervorst2002eigenvalue,stewart2001eigensystems}.
Of them, Lanczos or Arnoldi type methods \cite{bai2000templates,saad1992eigenvalue,stewart2001eigensystems}
are typical, and
they are projection methods on the sequence of nested
$k$-dimensional Krylov subspaces
$$
\mathcal{V}_k=\mathcal{K}_k(A,v_1)
={\rm span}\{v_1,Av_1,\ldots,A^{k-1}v_1\}
$$
and compute approximations to $\lambda$ and $x$.
For $A$ Hermitian, Arnoldi type methods
reduce to Lanczos type methods \cite{bai2000templates,parlett1998symmetric}.
Suppose that $V_k=(v_1,\ldots,v_k)$ is column orthonormal and is
generated by the Lanczos process in the Hermitian case
or the Arnoldi process in the non-Hermitian case,
and take the expansion vector $w_k=v_k\in \mathcal{V}_k$.
Then the next basis vector
$v_{k+1}$ is obtained by orthonormalizing $Aw_k$ against
$V_k$, the projection matrix $V_k^HAV_k$ of $A$ onto $\mathcal{V}_k$
is Hermitian tridiagonal for $A$ Hermitian or upper
Hessenberg for $A$ non-Hermitian, and the columns
of $(V_k,v_{k+1})$ form an orthonormal
basis of the $(k+1)$-dimensional Krylov
subspace $\mathcal{V}_{k+1}$, where the superscript $H$ denotes
the conjugate transpose of a vector or matrix.
If the expansion vector $v_k$ in $Av_k$
is replaced by any $w_k\in\mathcal{V}_k$ that is not deficient
in $v_k$, we deduce from Proposition 1 of Ye \cite{ye2008optimal}
that the same $\mathcal{V}_{k+1}$ and $V_{k+1}$ are generated.

However, if we start with a {\em non}-Krylov subspace $\mathcal{V}_k$,
then the expanded subspace $\mathcal{V}_{k+1}$ critically depends
on the choice of expansion vector $w_k\in\mathcal{V}_k$.
It is well known that the success of a general projection method requires
that an underlying subspace contain sufficiently accurate
approximations to $x$ but a sufficiently accurate subspace
cannot guarantee the convergence of the Ritz and harmonic
Ritz vectors obtained by the standard and harmonic
Rayleigh--Ritz methods with respect to the subspace
\cite{jia95,jia2001analysis}. To fix this deficiency,
the refined and refined harmonic Rayleigh--Ritz methods have been proposed,
which are shown to ensure the convergence of refined and refined
harmonic Ritz vectors computed by the refined and refined harmonic
Rayleigh--Ritz methods when the subspace is good
enough \cite{jia97,jia05,jia2001analysis,stewart2001eigensystems,wu2017}.

The accuracy of a projection subspace $\mathcal{V}$
for eigenvector approximation can be measured by its angle with the desired
unit length eigenvector $x$ \cite{parlett1998symmetric,saad1992eigenvalue,stewart2001eigensystems}:
\begin{equation}\label{angle}
\cos\angle(\mathcal{V},x)=\max_{z\in \mathcal{V}}\cos\angle(z,x)
=\max_{z\in \mathcal{V}}\frac{\mid x^Hz\mid}{\|z\|},
\end{equation}
where $\angle(z,x)$ denotes the acute angle between $x$ and the
nonzero vector $z$.
For a general $k$-dimensional non-Krylov subspace $\mathcal{V}$ with the subscript
$k$ dropped, in this paper we will consider the following optimal subspace
expansion problem:
{\em Suppose $\cos\angle(\mathcal{V},x)\not=0$, for any nonzero
$w\in\mathcal{V}$, multiply it by $A$, and define the $(k+1)$-dimensional expanded
subspace
\begin{equation}\label{expand}
\mathcal{V}_w=\mathcal{V}+{\rm span}\{Aw\}.
\end{equation}
Then which vector $w_{opt}\in \mathcal{V}$
is an optimal expansion vector that, up to scaling, solves
\begin{equation}\label{maxpro}
\max_{w\in\mathcal{V}}\cos\angle(\mathcal{V}_w,x)?
\end{equation}
}

This problem was first considered by Ye \cite{ye2008optimal}
and later paid attention by Wu and Zhang \cite{wu2014}.
The choice of $w\in \mathcal{V}$ is
essential to subspace expansion, and different $w$ may affect the quality
of $\mathcal{V}_w$ substantially. At expansion iteration $k>1$,
the Lanczos or Arnoldi type expansion takes $w=v_k$, the last column
of $V_k$. The Ritz expansion method \cite{wu2014,ye2008optimal}, a
variant of the Arnoldi method, uses the approximating Ritz vector
as $w$ to expand the subspace. It is mathematically equivalent to
the Residual Arnoldi (RA) method proposed in
\cite{lee2007residual,leestewart07}.

The Shift-Invert Residual Arnoldi (SIRA) method is an alternative of the RA
method for computing the eigenvalue $\lambda$ closest to a given target $\tau$. More
variants have been developed in \cite{jiali2014,jiali2015}, and
they fall into the categories of
the harmonic Rayleigh--Ritz, refined Rayleigh--Ritz,
and refined harmonic Rayleigh--Ritz methods
\cite{bai2000templates,parlett1998symmetric,saad1992eigenvalue,
stewart2001eigensystems,vandervorst2002eigenvalue}. The SIRA
type methods are mathematically equivalent to the counterparts of
the Jacobi--Davidson (JD) type methods; see Theorem~2.1 of
\cite{jiali2014}.
Just as the shift-invert Arnoldi (SIA) type methods applied
to $B=(A-\tau I)^{-1}$,
the SIRA type and JD type methods \cite{sleijpen2000jacobi} both
use $B$ to construct nested subspaces but, unlike the SIA type methods,
they project the
original $A$ rather than $B$ onto the subspaces
when finding approximations of $(\lambda,x)$ \cite{jiali2014}.
At iteration $k$, these methods and the SIA type methods
are mathematically common in solving a linear system
\begin{equation}\label{siainner}
(A-\tau I)u=w,
\end{equation}
i.e., computing $u=Bw$, where $w=v_k$ in the SIA type methods
and is mathematically equal to the current approximate
eigenvector in SIRA and JD type methods; see
\cite[Theorem 2.1 and its proof]{jiali2014} for details.

Ye \cite{ye2008optimal} finds an explicit expression of $w_{opt}$
for a general $A$, as stated below.

Ye's result \cite[Theorem 1]{ye2008optimal}:
{\em Let the columns of $V$ form an orthonormal basis of $\mathcal{V}$,
define the residual
\begin{equation}\label{Res}
R=AV-V(V^HAV),
\end{equation}
and assume that the rank $\rank(R)\geq 2$.
Then
\begin{equation}\label{expr1}
w_{opt}=VR^{\dagger}x+Vc
\end{equation}
for any $c\in \mathcal{N}(R)$, the null space of $R$, where the superscript
$\dagger$ denotes the Moore-Penrose generalized inverse of a matrix.
}

Unfortunately, Ye shows that (\ref{expr1})
cannot be exploited to obtain a computable (nearly) optimal
replacement of $w_{opt}$ as it involves the a priori $x$: (i) $Vc$ makes
no contribution to the expansion of $\mathcal{V}$ as $AVc=0$; (ii) since
$R^HV=0$, we must have $R^{\dagger}z=0$
when replacing $x$ by any $z\in\mathcal{V}$ and taking $c=0$,
leading to $w=VR^{\dagger}z=0$.
To this end, he gives up (\ref{expr1}) and instead turns to deriving a
maximization characterization of $\cos\angle(\mathcal{V}_w,x)$
for a {\em given} $w\in\mathcal{V}$ under the assumption that $A$ is
Hermitian; see Theorem 2 of \cite{ye2008optimal}
and the first result of Theorem \ref{thm:comVwx} to be presented in this paper.
But he does not find a solution to the maximization characterization problem
for a given $w\in \mathcal{V}$. Notice that one cannot
obtain $w_{opt}$ from the maximization characterization. Without
any other information, e.g., $w_{opt}$,
Ye has made an {\em approximate} analysis on the maximization characterization
of $\cos\angle(\mathcal{V}_w,x)$ and argued that the Ritz vector from
$\mathcal{V}$ might be a good approximate maximizer
of (\ref{maxpro}) and thus might be a nearly optimal expansion vector.
Unfortunately, as we shall see,
Ye's analysis and arguments have evident defects; see a detailed explanation after Remark 3 in this paper. We stress that Ye's proof of the maximization characterization
of $\cos\angle(\mathcal{V}_w,x)$ holds only for $A$ Hermitian.

Wu and Zhang \cite{wu2014}
do {\em not} consider the optimal expansion problem (\ref{maxpro}) itself. Instead,
they focus on showing that the refined Ritz vector
from $\mathcal{V}$ can be a better expansion vector than the Ritz vector
from $\mathcal{V}$ for $A$ general. Based on the analysis,
they have developed a refined variant of the RA method for a general $A$,
which has been numerically confirmed to be more efficient than the RA method.

In this paper, we shall revisit problem (\ref{maxpro})
for $A$ {\em general}. Our theoretical contributions consists of two parts.
The first part includes the generalization of Ye's major result in
\cite{ye2008optimal}, i.e., Theorem 2, to the non-Hermitian case.
We first prove that
$\cos\angle(\mathcal{V}_w,x)$ for a given $w\in\mathcal{V}$
can be formulated as a maximization characterization problem,
which extends  Theorem 2 of \cite{ye2008optimal} to the general case.
Then we find its maximizer.
This result is new even for $A$ Hermitian and can be exploited to
explain the defect of Ye's approximate analysis. These results are secondary,
and our major theoretical contributions are in the second part. We
establish informative expressions on $w_{opt}$,
$(I-P_V)Aw_{opt}$ and the optimally expanded $\mathcal{V}_{w_{opt}}$,
where $P_V=VV^H$ is the orthogonal projector onto $\mathcal{V}$.
The results show that (i) generally $w_{opt}\not=P_Vx$ where $P_Vx$ is the
orthogonal projection of $x$ onto $\mathcal{V}$
and $P_Vx/\|P_Vx\|$ is the best approximation to $x$
from $\mathcal{V}$, (ii) $(I-P_V)Aw_{opt}=RR^{\dagger}x$,
which is the orthogonal projection of $x$ onto the subspace
${\rm span}\{R\}$ and whose normalization $RR^{\dagger}x/\|RR^{\dagger}x\|$
is the best approximation to $x$ from ${\rm span}\{R\}$, (iii)
$\mathcal{V}_{w_{opt}}=\mathcal{V}\oplus {\rm span}\{RR^{\dagger}x\}$
with $\oplus$ the orthogonal direct sum, and
(iv) the orthogonal projection of $x$ onto $\mathcal{V}_{w_{opt}}$
is $(P_V+RR^{\dagger})x$.

For the purpose of practical computations, we make a clear and rigorous
analysis on the theoretical results and
obtain a number of computable optimally expanded subspaces
$\mathcal{V}_{\widetilde{w}_{opt}}$, which depend on chosen projection methods.
As has already seen from (\ref{expr1}) and the comments followed,
it is hard to interpret $w_{opt}$, let alone
a computable optimal replacement of $w_{opt}$.
Fortunately, we are able to take a completely new approach to consider a
computable optimal subspace expansion.
Our key observation is:
when expanding $\mathcal{V}$ to a computable optimal subspace,
it follows from the fundamental identity
\begin{equation}\label{id}
\mathcal{V}_{w_{opt}}=\mathcal{V}+ {\rm span}
\{Aw_{opt}\}=\mathcal{V}\oplus {\rm span}
\{(I-P_V)Aw_{opt}\}
\end{equation}
that seeking
a computable optimal replacement of $w_{opt}$ is
unnecessary. This identity provides us a new perspective
and motivates us to
find out a computable optimal replacement of
$(I-P_V)Aw_{opt}$ as a whole {\em rather than} $w_{opt}$ itself
by a certain computable optimal one. As it will be clear,
such an optimal replacement can
be defined precisely and is deterministic within
the framework of each of the standard, harmonic, refined,
and refined harmonic Rayleigh--Ritz methods, respectively. As it turns out,
computable optimal replacements are
the Ritz vector, refined Ritz vector, harmonic Ritz vector and
refined harmonic Ritz vector of $A$ from the subspace
${\rm span}\{R\}$ {\em rather than} $\mathcal{V}$
for these four kinds of projection methods.
Taking the standard Rayleigh--Ritz method as an example, we will
describe how to expand $\mathcal{V}$ to the computable optimal subspace
$\mathcal{V}_{\widetilde{w}_{opt}}$. We will also show that, based on
our results, there is some other novel optimal expansion
that is {\em independent} of the desired $x$
in practical computations and is promising and more robust.

The paper is organized as follows. In Section 2, we consider
the solution of the optimal subspace expansion problem (\ref{maxpro})
for a general $A$,
present our theoretical results, and make an analysis on them.
In Section 3, we show how to obtain computable
optimal replacements of $(I-P_V)Aw_{opt}$ and construct
optimally expanded subspaces $\mathcal{V}_{\widetilde{w}_{opt}}$.
We also present some other robust optimal expansion approach.
In Section 4, we report numerical experiments to demonstrate the effectiveness of
theoretical and computable optimal subspace expansion approaches, and compare
them with the Lanczos or Arnoldi type
expansion with $w=v_k$ and the the RA method, i.e., the
Ritz expansion \cite{wu2014,ye2008optimal} with $w$ being the Ritz vector from
$\mathcal{V}$.
In Section 5, we conclude the paper with some problems and issues that deserve
future considerations.

Throughout the paper, denote by $I$ the identity matrix
with the order clear from the context, by $\mathcal{C}^k$ the
complex space of dimension $k$, and by $\mathcal{C}^{n\times k}$
or $\mathcal{R}^{n\times k}$ the set of $n\times k$ complex
or real matrices.

\section{The optimal $w_{opt}$, $(I-P_V)Aw_{opt}$ and
$\mathcal{V}_{w_{opt}}$}
\label{optex}

For a general $A$, we first establish two results on $\cos\angle(\mathcal{V}_w,x)$
for a given $w$. The first characterizes it as a maximization problem and
generalizes Ye's Theorem 2 in \cite{ye2008optimal} and the second
gives a maximizer of this problem. We remind the reader
that these results are secondary.
After them, we will present our major results.

\begin{Thm}\label{thm:comVwx}
For $w\in\mathcal{V}$ with $x^Hw\neq0$ and $\|x\|=1$,
assume that $x\not\in\mathcal{V}$ and $Aw\not\in\mathcal{V}$,
define $r_w=(A-\phi I )w$ with $\phi=\frac{x^HA w}{x^Hw}$,
and let the columns of $Q_w$ be an orthonormal basis of the orthogonal
complement of ${\rm span}\{r_w\}$ with respect to $\mathcal{C}^n$.
Then 
\begin{equation}\label{cosVwx}
\cos\angle(\mathcal{V}_w,x)
=\max\limits_{b\in\mathcal{V}}
\frac{\cos\angle(b,x)}{\sin\angle\left(b,r_w\right)}
=\frac{\cos\angle(b_w,x)}{\sin\angle(b_w,r_w)},
\end{equation}
where
\begin{equation}\label{bopt}
b_w=V(Q_wQ_w^HV)^{\dagger}x=V(V^HQ_wQ_w^HV)^{-1}V^Hx.
\end{equation}
\end{Thm}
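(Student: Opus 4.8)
The plan is to replace the expansion direction $Aw$ by a vector orthogonal to $x$, which decouples the numerator of the relevant cosine from that direction. Since $w\in\mathcal{V}$, we have $Aw-\phi w=r_w$ with $\phi w\in\mathcal{V}$, so $\mathcal{V}_w=\mathcal{V}+{\rm span}\{Aw\}=\mathcal{V}+{\rm span}\{r_w\}$, and the choice $\phi=\frac{x^HAw}{x^Hw}$ is exactly what forces $x^Hr_w=x^HAw-\phi\,x^Hw=0$, i.e.\ $r_w\perp x$. The hypotheses make everything non-degenerate: $x^Hw\neq0$ makes $\phi$ well defined, $Aw\notin\mathcal{V}$ gives $r_w\notin\mathcal{V}$ (and $r_w\neq0$), and $x\notin\mathcal{V}$ excludes the trivial case in which $\cos\angle(\mathcal{V}_w,x)=1$.

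Next I would parametrize $\mathcal{V}_w$. Because $r_w\notin\mathcal{V}$, every $z\in\mathcal{V}_w$ decomposes uniquely as $z=b+\gamma r_w$ with $b\in\mathcal{V}$ and $\gamma\in\mathcal{C}$, and $x^Hz=x^Hb$ since $x\perp r_w$. For fixed $b\neq0$, minimizing $\|b+\gamma r_w\|$ over $\gamma$ gives $\|b\|\sin\angle(b,r_w)$ at $\gamma=-r_w^Hb/\|r_w\|^2$; the choice $b=0$ yields only a zero numerator. Hence
\[
\cos\angle(\mathcal{V}_w,x)
=\max_{0\neq b\in\mathcal{V}}\frac{|x^Hb|}{\|b\|\,\sin\angle(b,r_w)\,\|x\|}
=\max_{b\in\mathcal{V}}\frac{\cos\angle(b,x)}{\sin\angle(b,r_w)},
\]
the first asserted equality; the maximum is attained by compactness because $\sin\angle(b,r_w)\geq\sin\angle(\mathcal{V},r_w)>0$ on the unit sphere of $\mathcal{V}$.

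Then I would identify the maximizer. Writing $Q_wQ_w^H=I-r_wr_w^H/\|r_w\|^2$, the objective equals $|x^Hb|/(\|Q_w^Hb\|\,\|x\|)$; substituting $b=Vc$ and setting $M=Q_w^HV$, $\widetilde x=Q_w^Hx$, the condition $r_w\notin\mathcal{V}$ (i.e.\ ${\rm span}\{r_w\}\cap\mathcal{V}=\{0\}$) makes $M$ of full column rank, while $r_w\perp x$ gives $x=Q_wQ_w^Hx$, hence $x^HVc=\widetilde x^HMc$. So the problem reduces to maximizing $|\widetilde x^Hy|/\|y\|$ over $y\in{\rm range}(M)$, with value $\cos\angle({\rm range}(M),\widetilde x)$ attained at $y=MM^\dagger\widetilde x$; full column rank of $M$ then forces $c=M^\dagger\widetilde x$. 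Using $(Q_wQ_w^HV)^\dagger=M^\dagger Q_w^H$ (valid since $Q_w$ has orthonormal columns), $M^\dagger=(M^HM)^{-1}M^H$, and $Q_wQ_w^Hx=x$, one rewrites $c=(Q_wQ_w^HV)^\dagger x=(V^HQ_wQ_w^HV)^{-1}V^Hx$, i.e.\ $b_w=Vc$ as stated, and by construction $\cos\angle(b_w,x)/\sin\angle(b_w,r_w)$ equals the above maximum. For the last equality, note $(Q_wQ_w^HV)(Q_wQ_w^HV)^\dagger x=Q_wQ_w^Hb_w$, whose cosine with $x$ is $|x^HQ_wQ_w^Hb_w|/(\|Q_wQ_w^Hb_w\|\,\|x\|)=|x^Hb_w|/(\|b_w\|\sin\angle(b_w,r_w)\,\|x\|)$, again using $Q_wQ_w^Hx=x$; this is exactly $\cos\angle(b_w,x)/\sin\angle(b_w,r_w)$.

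The norm minimization over $\gamma$ and the Moore--Penrose bookkeeping are routine. The step carrying the real content --- and what makes the argument go through verbatim for non-Hermitian $A$, unlike Ye's Hermitian-specific proof --- is the choice of $\phi$ rendering $r_w\perp x$: after it, the whole derivation is plain Euclidean geometry in $\mathcal{C}^n$ using no spectral or symmetry property of $A$. The only points needing care are the full-column-rank claim for $M=Q_w^HV$, which is precisely where $Aw\notin\mathcal{V}$ enters (and which guarantees $V^HQ_wQ_w^HV$ is invertible), and checking that the two displayed forms of $b_w$ coincide.
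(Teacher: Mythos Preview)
Your proof is correct and follows essentially the same approach as the paper: both use the key observation that the choice of $\phi$ makes $r_w\perp x$, parametrize $\mathcal{V}_w=\mathcal{V}+{\rm span}\{r_w\}$, minimize over the $r_w$-coefficient to obtain the first equality, and then reduce the optimization to a projection problem via $Q_w$, invoking full column rank of $Q_w^HV$ (equivalently of $Q_wQ_w^HV$) from $r_w\notin\mathcal{V}$. Your bookkeeping is slightly more compact---you invoke $(Q_wM)^{\dagger}=M^{\dagger}Q_w^{H}$ directly and derive full column rank by the one-line argument ${\rm span}\{r_w\}\cap\mathcal{V}=\{0\}$, whereas the paper writes out the null-space equation and bounds it via $\cos^2\angle(\mathcal{V},r_w)<1$---but the underlying ideas and the order of steps are the same.
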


\begin{proof}
From the definition of $r_w$, we have
$$
\mathcal{V}+{\rm span}\{Aw\}=\mathcal{V}+{\rm span}\{r_w\}.
$$
By assumptions, we have $r_w\not=0$. Notice that any nonzero
$a\in\mathcal{V}_w$ but $a\not\in\mathcal{V}$ can be uniquely written as
$$
a=b+\beta r_w,
$$
where $b\in\mathcal{V}$ and $\beta$ is a nonzero scalar.
Since $Aw=r_{w}+\phi w$ and $x^Hr_{w}=0$, we obtain
\begin{eqnarray}
\cos\angle(\mathcal{V}_w,x)
&=&\max\limits_{a\in\mathcal{V}_w}\frac{|x^Ha|}
{\|a\|} = \max\limits_{b\in\mathcal{V},b+\beta r_w\neq 0}
\frac{|x^H(b+\beta r_w)|}{\|b+\beta r_w\|}\nonumber\\
&=&\max\limits_{b\in\mathcal{V},b+\beta r_w\neq 0}
\frac{|x^Hb|}{\|b+\beta r_w\|}\nonumber\\
&=&\max\limits_{b\in\mathcal{V}}\max\limits_{
\beta\not=0}\frac{\left|x^Hb\right|}{\|b+
\beta r_w\|}=\max\limits_{b\in\mathcal{V}}\frac{\left|x^Hb\right|}
{\min\limits_{\beta\not=0}\|b+\beta r_w\|}\nonumber\\
&=&\max\limits_{b\in\mathcal{V}}\frac{\left|x^H
b\right|}{\left\|b-\frac{r_w^Hb}{\|r_w\|^2}
r_w\right\|}=\max\limits_{b\in\mathcal{V}}
\frac{\left|x^Hb\right|}{\|b\|\sin\angle(b,r_w)}\nonumber\\
&=&\max\limits_{b\in\mathcal{V}}
\frac{\cos\angle(b,x)}{\sin\angle(b,r_w)}, \label{ratio}
\end{eqnarray}
which establishes the maximization characterization in (\ref{cosVwx}).

We now seek a maximizer of the maximization problem in (\ref{cosVwx}).
Since $x^H r_w=0$, by the definition of $Q_w$,
there exists a vector $z_w\in \mathcal{C}^{n-1}$ such
that the unit length eigenvector
$x=Q_w z_w$ with $\|z_w\|=1$. As a result, we obtain $Q_w^Hx=z_w$,
$Q_wQ_w^Hx=Q_wz_w=x$, and
$$
\cos\angle(b,x)
=\frac{|x^H b|}{\|b\|}
=\frac{|(Q_w z_w)^H b|}{\|b\|}
=\frac{|z_w^H(Q_w^H b)|}{\|b\|}.
$$
Since $\frac{\|Q_w^H b\|}{\|b\|}=\sin\angle(b,r_w)$,
it follows from the above that
\begin{eqnarray}
\cos\angle(b,x)
&=& \frac{\|Q_w^Hb\|}{\|b\|}\frac{|z_w^H(Q_w^H b)|}{\|Q_w^Hb\|}=
\sin\angle(b,r_w)\cos\angle(Q_w^Hb,z_w). \label{cossin}
\end{eqnarray}
Therefore, from (\ref{ratio}), (\ref{cossin}),
the orthonormality of $Q_w$ and $x=Q_wz_w$, writing a nonzero $b=V y$,
we obtain
\begin{eqnarray*}
\cos\angle(\mathcal{V}_w,x)
&=&\max\limits_{b\in\mathcal{V},b\not=0}
\cos\angle(Q_w^Hb,z_w)\\
&=&\max\limits_{b\in\mathcal{V},b\not=0}
\cos\angle(Q_wQ_w^Hb,Q_wz_w) \\
&=&\max\limits_{y\not=0}
\cos\angle(Q_wQ_w^HVy,x)\\
&=&\cos\angle((Q_wQ_w^HV)(Q_wQ_w^HV)^{\dagger}x,x)
\end{eqnarray*}
since $(Q_wQ_w^HV)(Q_wQ_w^HV)^{\dagger}x$ is the orthogonal
projection of $x$ onto the subspace ${\rm span}\{Q_wQ_w^HV\}$.
As a result, $y_w=(Q_wQ_w^HV)^{\dagger}x$ solves $\max\limits_{y\in\mathcal{C}^m,y\neq 0}
\cos\angle(Q_wQ_w^HVy,x)$, and $b_w=Vy_w$ is the first relation defined in (\ref{bopt}),
which proves the second relation in the right-hand side of (\ref{cosVwx}).

We next prove that $Q_wQ_w^HV$ has full column rank.
This amounts to showing that the cross-product matrix
$V^HQ_wQ_w^HV$ is positive definite by noting that $(Q_wQ_w^H)^2=Q_wQ_w^H$.
To this end, it suffices to prove that the solution of the homogenous
linear system $Q_wQ_w^HVz=0$ is zero.
Since $Q_wQ_w^H=I-\frac{r_wr_w^H}{\|r_w\|^2}$, the system
becomes
$$
\left(I-\frac{r_wr_w^H}{\|r_w\|^2}\right)Vz=0,
$$
which yields
\begin{equation}\label{nonsingular}
\frac{V^Hr_wr_w^HV}{\|r_w\|^2}z=z.
\end{equation}
Since
$$
\left\|\frac{V^Hr_wr_w^HV}{\|r_w\|^2}\right\|=\frac{\|V^Hr_w\|^2}
{\|r_w\|^2}=\cos^2\angle(\mathcal{V},r_w),
$$
taking norms in the two sides of (\ref{nonsingular}) gives
$$
\|z\|\cos^2\angle(\mathcal{V},r_w)\geq\|z\|,
$$
which holds only if $z=0$ or $\cos\angle(\mathcal{V},r_w)=1$. The latter
means that $r_w=Aw-\phi w\in \mathcal{V}$, i.e, $Aw\in\mathcal{V}$,
a contradiction to our assumption. Hence we must have $z=0$, and
$Q_wQ_w^HV$ has full column rank. Exploiting $Q_wQ_w^Hx=x$, we obtain
$$
y_w=(Q_wQ_w^HV)^{\dagger}x=(V^HQ_wQ_w^HV)^{-1}V^Hx,
$$
which proves the second relation in (\ref{bopt}).
\end{proof}

\begin{Rem}
This theorem holds for a general $A$.
For $A$ Hermitian, we have $\phi=\lambda$,
but for $A$ non-Hermitian or, more rigorously,
non-normal, we have $\phi\not=\lambda$. In the Hermitian case,
Theorem~2 in \cite{ye2008optimal} is the first
relation in the right-hand side of {\rm (\ref{cosVwx})}.
The second relation in the right-hand side of {\rm (\ref{cosVwx})}
is new even for $A$ Hermitian, and gives an explicit expression of
the maximizer $b_w$ of
the maximization characterization problem in {\rm (\ref{cosVwx})}.
\end{Rem}

\begin{Rem}
From {\rm (\ref{maxpro})}, relation {\rm (\ref{cosVwx})} shows that $w_{opt}$ solves
\begin{equation}\label{bmin}
\max_{w\in\mathcal{V}}\cos\angle(\mathcal{V}_w,x)=\max_{w\in\mathcal{V}}
\frac{\cos\angle(b_w,x)}{\sin\angle(b_w,r_w)}.
\end{equation}
However, because of the complicated nonlinear relationship
between $b_w$ and $r_w$,
it appears hard to solve the above problem
to derive an explicit expression of $w_{opt}$.
\end{Rem}

\begin{Rem}
By approximately maximizing
the first relation in the right-hand side of {\rm (\ref{cosVwx})} and
taking $b$ in it as the Ritz vector $z_1$ of $A$ from $\mathcal{V}$ that is used
to approximate the desired $x$, Ye \cite{ye2008optimal} makes
an approximate analysis on $\frac{\cos\angle(z_1,x)}{\sin\angle(z_1,r_w)}$,
and argues that, without any other information, $z_1$ may in practice
be a good approximate solution of
$\max_{w\in \mathcal{V}}\frac{\cos\angle(z_1,x)}{\sin\angle(z_1,r_w)}$. He thus
suggests $z_1$ as an approximation to the solution $w_{opt}$ of {\rm (\ref{maxpro})}.
However, Ye's proof of
the first relation in the right-hand side of {\rm (\ref{cosVwx})}
and his approximate analysis is unapplicable to a non-Hermitian $A$.
\end{Rem}

As a matter of fact, Ye's analysis in the Hermitian case
does not seem theoretically sound and has some arbitrariness,
causing that his claim may be problematic, as will be shown
below.

Since
$$
\cos\angle(\mathcal{V}_{w_{opt}},x)
\geq \cos\angle(\mathcal{V}_w,x)
$$ for any $w\in\mathcal{V}$,
Ye attempts to seek a good approximation to the maximizer $w_{opt}$
of $\max_{w\in\mathcal{V}}\cos\angle(\mathcal{V}_w,x)$.
To this end, in the first relation of the right-hand side in (\ref{cosVwx}),
Ye takes $b\in\mathcal{V}$ to be the Ritz vector $z_1$ of $A$ from
$\mathcal{V}$ that is used to approximate the desired $x$. Then
(\ref{cosVwx}) shows that
$$
\cos\angle(\mathcal{V}_w,x)=\frac{\cos\angle(b_w,x)}{\sin\angle(b_w,r_w)}
\geq \frac{\cos\angle(z_1,x)}{\sin\angle(z_1,r_w)},
$$
where $b_w$ is defined by (\ref{bopt}). Notice that
$b_w$ is a function of $w\in\mathcal{V}$.
However, since $z_1$ is a constant vector and
{\em independent} of $w\in\mathcal{V}$,
there is no reason that $z_1$ is close to $b_w$ unless $b_w\approx z_1$ for
some specific $w$ and $\mathcal{V}$, as we argue below.

Notice that $Q_wQ_w=I-\frac{r_wr_w^H}{\|r_w\|^2}$
and $r_w^Hx=0$.
Applying the Sherman--Morrison formula to $(V^HQ_wQ^HV)^{-1}$ and
making use of $r_w^HP_Vx=-r_w^H(I-P_V)x$,
by some manipulation we can justify that
\begin{equation}\label{bw}
b_w=P_V(x+\alpha_w w+\beta_wAw),
\end{equation}
where
$$
\alpha_w=\phi\frac{r_w^H(I-P_V)x}{\|(I-P_V)r_w\|^2},\
\beta_w=-\frac{r_w^H(I-P_V)x}{\|(I-P_V)r_w\|^2}
$$
with $\phi=\frac{x^HAw}{x^Hw}$.
Observe that $\|\alpha_w w\|$ and $\|\beta_wAw\|$
do not depend on the size of $\|w\|$ and are generally not small.
Suppose that $w$ is normalized.
In this case, $|\alpha_w|$ and $|\beta_w|$ are generally not small.
Recall that $P_Vx/\|P_Vx\|$ is
the best approximation to $x$ from $\mathcal{V}$ and $z_1$ is
the computable approximation to $x$ obtained by the standard Rayleigh--Ritz method.
Therefore, it is seen from (\ref{bw}) that
the constant vector $z_1$ is generally not a good approximation to
$b_w$ in direction for all $w\in\mathcal{V}$ unless $w\approx x$, which can occur
only when $P_Vx/\|P_Vx\|\approx x$, that is, $\cos\angle(\mathcal{V},x)$ is already sufficiently small. These mean that the functions
$$
\frac{\cos\angle(b_w,x)}{\sin\angle(b_w,r_w)} \ \ {\rm and}\ \
\frac{\cos\angle(z_1,x)}{\sin\angle(z_1,r_w)}
$$
generally have no similarity and their difference is not close to zero
unless $z_1\approx x$.
As a result, the maximizer of $\max_{w\in\mathcal{V}}
\frac{\cos\angle(z_1,x)}{\sin\angle(z_1,r_w)}$ generally bears no relation to
the maximizer $w_{opt}$ of
$$
\max_{w\in\mathcal{V}}\cos\angle(\mathcal{V}_w,x)
=\max_{w\in \mathcal{V}}\frac{\cos\angle(b_w,x)}{\sin\angle(b_w,r_w)}.
$$

The above analysis indicates that $b=z_1$
in the first relation of (\ref{cosVwx}) is generally not a good approximation to
$b_w$ and the resulting
$\max_{w\in\mathcal{V}} \frac{\cos\angle(z_1,x)}{\sin\angle(z_1,r_w)}$ is
generally not a good replacement of $\max_{w\in\mathcal{V}}\cos\angle(\mathcal{V}_w,x)$.
In the Hermitian case, by using a few approximate and heuristic
arguments, Ye \cite{ye2008optimal} argues that $w=z_1$ may be a good approximate
maximizer of $\max_{w\in\mathcal{V}}
\frac{\cos\angle(z_1,x)}{\sin\angle(z_1,r_w)}$ and then uses
$z_1$ as a replacement of $w_{opt}$ that solves (\ref{maxpro}).

In what follows we give up any possible further analysis
on Theorem~\ref{thm:comVwx} and
instead consider (\ref{maxpro}) from new perspectives.
We will establish a number of important and
insightful results. We derive a new
expression of $w_{opt}$, which is essentially the same as but formally different
from (\ref{expr1}) with $c=0$. The new form of $w_{opt}$
will play a critical role in establishing
explicit expressions of the a priori $(I-P_V)w_{opt}$, the
optimally expanded subspace $\mathcal{V}_{w_{opt}}$
and some other important quantities.

Note that $P_V=VV^H$ is the orthogonal projector onto $\mathcal{V}$.
Assume that $x\not\in\mathcal{V}$, $w\in\mathcal{V}$ and $Aw\not\in\mathcal{V}$.
Lemma 1 of \cite{ye2008optimal} states that
\begin{equation}
\cos\angle(\mathcal{V}_w,x)
=\sqrt{\cos^2\angle(\mathcal{V},x)+\cos^2\angle((I-P_V)Aw,x)},
\label{relationofvw and v}
\end{equation}
where $(I-P_V)Aw=r=Ry$ with $R$ defined by (\ref{res}) and
$w=Vy$. We should remind the reader that Lemma 1 of \cite{ye2008optimal}
uses the form $r=Ry$, but we use the different form
$(I-P_V)Aw$ here by writing
$R$ in the form of $(I-P_V)AV$. (\ref{id}) motivates us this change in form
and enables us to establish more informative
theoretical results on the optimal subspace expansion problem (\ref{maxpro}).
Relation (\ref{relationofvw and v}) indicates
that
\begin{equation}\label{sincos}
w_{opt}=\arg\max_{w\in\mathcal{V}}\cos\angle((I-P_V)Aw,x).
\end{equation}

Let the matrix $(V,V_{\perp})$ be unitary.
Then $I-P_V=V_{\perp}V_{\perp}^H$ and $(I-P_V)AV=V_{\perp}V_{\perp}^HAV$.
Define the vector
\begin{equation}\label{xperp}
x_{\perp}=(I-P_V)x,
\end{equation}
which is the orthogonal projection of $x$ onto the orthogonal complement of
$\mathcal{V}$ with respect to $\mathcal{C}^n$
and is nonzero for $x\not\in\mathcal{V}$. Then the matrix pair
\begin{equation}
\{V^HA^Hx_{\perp}x_{\perp}^HAV,\
V^HA^H(I-P_V)AV\}=\{V^HA^Hx_{\perp}x_{\perp}^HAV,\
V^HA^H V_{\perp}V_{\perp}^HAV\} \label{pair}
\end{equation}
restricted to the orthogonal complement $\mathcal{N}^{\perp}(V_{\perp}^HAV)$
of $\mathcal{N}(V_{\perp}^HAV)$ is Hermitian definite, that is,
the range restricted $V^HA^H(I-P_V)AV=V^HA^HV_{\perp}V_{\perp}^HAV$
is Hermitian positive definite. Note that
$$
\mathcal{N}^{\perp}(V_{\perp}^HAV)=\mathcal{R}(V^HA^HV_{\perp}),
$$
the column space of $V^HA^HV_{\perp}$. We write
the range restricted matrix pair (\ref{pair}) as
\begin{equation}
\{V^HA^Hx_{\perp}x_{\perp}^HAV,\
V^HA^HV_{\perp}V_{\perp}^HAV\}\big|_{\mathcal{R}(V^HA^HV_{\perp})}. \label{pair2}
\end{equation}

\begin{Thm}\label{optimal}
Assume that $x\not\in\mathcal{V}$, $w\in\mathcal{V}$ and $Aw\not\in\mathcal{V}$,
and let $R$ be defined by {\rm (\ref{Res})}.
Then 
the optimal expansion vector, up to scaling, is
\begin{equation}\label{exactwopt}
w_{opt}=V(V_{\perp}^HAV)^{\dagger}V_{\perp}^Hx=((I-P_V)AP_V)^{\dagger}x
=VR^{\dagger}x,
\end{equation}
\begin{eqnarray}
(I-P_V)Aw_{opt}&=&RR^{\dagger}x,\label{aw}\\
\mathcal{V}_{w_{opt}}&=&
\mathcal{V}\oplus {\rm span}\{RR^{\dagger}x\},\label{vwopt}
\end{eqnarray}
and the columns of $(V,RR^{\dagger}x/\|RR^{\dagger}x\|)$
form an orthonormal basis of $\mathcal{V}_{w_{opt}}$. Furthermore,
\begin{eqnarray}
\cos\angle(\mathcal{V}_{w_{opt}},x)&=&\cos\angle(P_Vx+RR^{\dagger}x,x),
\label{orthwopt}\\
\cos\angle(\mathcal{V}_{w_{opt}},x)&=&\cos\angle(\mathcal{V}_R,x),
\label{vwvr}
\end{eqnarray}
where the subspace
\begin{equation}\label{vr}
\mathcal{V}_R=\mathcal{V}\oplus {\rm span}\{R\}.
\end{equation}
\end{Thm}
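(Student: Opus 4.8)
The plan is to take as the starting point the already-reduced problem (\ref{sincos}), namely $w_{opt}=\arg\max_{w\in\mathcal{V}}\cos\angle((I-P_V)Aw,x)$, which is handed to us by Lemma~1 of \cite{ye2008optimal} in the form (\ref{relationofvw and v}). Writing $w=Va$ and using $(I-P_V)AV=AV-V(V^HAV)=R$ (so $R=V_{\perp}V_{\perp}^HAV$), we get $(I-P_V)Aw=Ra$, and the problem becomes $\max_{a\neq 0}\cos\angle(Ra,x)$. As $a$ runs over $\mathcal{C}^k$, the vector $Ra$ sweeps out the whole column space $\mathcal{R}(R)$, so by the basic characterization (\ref{angle}) the maximum equals $\cos\angle(\mathcal{R}(R),x)$ and is attained when $Ra$ equals the orthogonal projection $RR^\dagger x$ of $x$ onto $\mathcal{R}(R)$ (recall $RR^\dagger$ is the orthogonal projector onto $\mathcal{R}(R)$). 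A particular solution of $Ra=RR^\dagger x$ is $a=R^\dagger x$, whence $w_{opt}=VR^\dagger x$, which is (\ref{expr1}) with $b=0$; the remaining freedom $a\mapsto a+b$, $b\in\mathcal{N}(R)$, is exactly the freedom in Ye's formula.

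Next I would turn $VR^\dagger x$ into the two closed forms in (\ref{exactwopt}). The one algebraic fact needed is the pseudoinverse identity $(U_1BU_2^H)^\dagger=U_2B^\dagger U_1^H$ whenever $U_1,U_2$ have orthonormal columns; it follows immediately by inserting an SVD $B=P\Sigma Q^H$ and noting that $U_1P$ and $U_2Q$ again have orthonormal columns, so $(U_1P)\Sigma(U_2Q)^H$ is an SVD of $U_1BU_2^H$. Applied in its one-sided form to $R=V_{\perp}(V_{\perp}^HAV)$ it gives $R^\dagger=(V_{\perp}^HAV)^\dagger V_{\perp}^H$, hence $w_{opt}=V(V_{\perp}^HAV)^\dagger V_{\perp}^Hx$; applied in its two-sided form to $(I-P_V)AP_V=V_{\perp}(V_{\perp}^HAV)V^H$ it gives $((I-P_V)AP_V)^\dagger=V(V_{\perp}^HAV)^\dagger V_{\perp}^H$, so $((I-P_V)AP_V)^\dagger x=w_{opt}$ as well.

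The remaining items are bookkeeping with orthogonal projectors. Equation (\ref{aw}) is immediate: $(I-P_V)Aw_{opt}=(I-P_V)AVR^\dagger x=RR^\dagger x$. For (\ref{vwopt}), $Aw_{opt}=P_VAw_{opt}+(I-P_V)Aw_{opt}$ with $P_VAw_{opt}\in\mathcal{V}$, so $\mathcal{V}_{w_{opt}}=\mathcal{V}+{\rm span}\{(I-P_V)Aw_{opt}\}=\mathcal{V}+{\rm span}\{RR^\dagger x\}$; since $RR^\dagger x\in\mathcal{R}(R)\subseteq\mathcal{R}(V_{\perp})=\mathcal{V}^\perp$, adjoining the unit vector $RR^\dagger x/\|RR^\dagger x\|$ to the orthonormal $V$ yields an orthonormal basis of $\mathcal{V}_{w_{opt}}$. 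With respect to this basis the orthogonal projector onto $\mathcal{V}_{w_{opt}}$ is $P_V+\frac{RR^\dagger x(RR^\dagger x)^H}{\|RR^\dagger x\|^2}$, and since $RR^\dagger$ is a Hermitian projector we have $x^HRR^\dagger x=\|RR^\dagger x\|^2$, so this projector carries $x$ to $P_Vx+RR^\dagger x$; as the cosine of the angle between a subspace and a vector equals the cosine between the vector and its orthogonal projection, (\ref{orthwopt}) follows. Finally, by the same inclusion $\mathcal{R}(R)\subseteq\mathcal{V}^\perp$ the sum $\mathcal{V}_R=\mathcal{V}\oplus{\rm span}\{R\}$ is indeed orthogonal, its orthogonal projector is $P_V+RR^\dagger$, and it too sends $x$ to $P_Vx+RR^\dagger x$; hence $\cos\angle(\mathcal{V}_R,x)=\cos\angle(P_Vx+RR^\dagger x,x)=\cos\angle(\mathcal{V}_{w_{opt}},x)$, which is (\ref{vwvr}).

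I do not expect a genuinely hard step here; the work lies in handling the pseudoinverse identities carefully and in dispatching the harmless degenerate case $RR^\dagger x=0$, which is ruled out by the hypothesis $Aw_{opt}\notin\mathcal{V}$ (equivalently $x\not\perp\mathcal{R}(R)$), so that the maximum is attained at a nonzero direction and the normalization $RR^\dagger x/\|RR^\dagger x\|$ is legitimate. The only conceptual point worth stating explicitly is the first reduction: that among all $u\in\mathcal{R}(R)$ the cosine $\cos\angle(u,x)$ is maximized uniquely in \emph{direction} by $u=RR^\dagger x$, while the preimage $a$ with $Ra=RR^\dagger x$ is determined only modulo $\mathcal{N}(R)$ — which is precisely why the new expression $VR^\dagger x$ is ``essentially the same as but formally different from'' (\ref{expr1}).
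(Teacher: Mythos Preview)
Your proof is correct and, for the identification of $w_{opt}$, genuinely more direct than the paper's. The paper casts $\max_{w\in\mathcal{V}}\cos^2\angle((I-P_V)Aw,x)$ as the Rayleigh quotient of the range-restricted Hermitian definite pair $\{V^HA^Hx_\perp x_\perp^HAV,\ V^HA^HV_\perp V_\perp^HAV\}\big|_{\mathcal{R}(V^HA^HV_\perp)}$, invokes the min--max principle, transforms to a standard eigenproblem, and then exploits the rank-one structure to read off the eigenvector explicitly as $y_{opt}=(B|_{\mathcal{R}(V^HA^HV_\perp)})^{-1}V^HA^Hx_\perp$, from which $w_{opt}=V(V_\perp^HAV)^\dagger V_\perp^Hx$ is extracted. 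You short-circuit all of this with the observation that $(I-P_V)Aw=Ra$ sweeps $\mathcal{R}(R)$, so the problem is literally $\cos\angle(\mathcal{R}(R),x)$, maximized at the orthogonal projection $RR^\dagger x$; this is cleaner and avoids the range-restriction bookkeeping entirely. The order of derivation of (\ref{exactwopt}) is also reversed: the paper obtains $V(V_\perp^HAV)^\dagger V_\perp^Hx$ first from the eigenvector and then deduces $VR^\dagger x$, whereas you start from $VR^\dagger x$ and obtain both closed forms via the single SVD-based identity $(U_1BU_2^H)^\dagger=U_2B^\dagger U_1^H$. The treatment of (\ref{aw})--(\ref{vwvr}) is essentially the same in both proofs. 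What the paper's heavier route buys is an explicit formula for the optimal value $\mu_{opt}=\|(B|_{\mathcal{R}(V^HA^HV_\perp)})^{-1/2}V^HA^Hx_\perp\|^2$ along the way; your argument yields the equivalent $\cos\angle((I-P_V)Aw_{opt},x)=\|RR^\dagger x\|$ just as easily, so nothing is lost.
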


\begin{proof}
Write $w=Vy$. Then by assumption, $\mathcal{V}$ is not an invariant subspace
of $A$, so that $(I-P_V)AV=V_{\perp}V_{\perp}^HAV\not=0$.
From (\ref{xperp}) and $(I-P_V)^2=I-P_V$, we obtain
\begin{eqnarray}
\cos^2\angle((I-P_V)Aw,x)
&=&\cos^2\angle((I-P_V)AVy,x) \notag\\
&=&\frac{|x^H(I-P_V)AVy|^2}{\|(I-P_V)AVy\|^2}\notag\\
&=&\frac{y^HV^HA^Hx_{\perp}x_{\perp}^HAVy}
{y^HV^HA^H(I-P_V)AVy},\label{cossq}
\end{eqnarray}
which is the Rayleigh quotient of the range
restricted Hermitian definite pair (\ref{pair2})
with respect to the nonzero vector $y$.
Keep in mind that $w_{opt}$ satisfies (\ref{sincos}).
Therefore, by the min-max
characterization of the eigenvalues of a Hermitian
definite pair (cf. e.g., \cite[pp. 281-2]{sunstewart}),
the solution to the maximization problem
\begin{equation}\label{maxprob}
\max_{w\in \mathcal{V}}\cos\angle((I-P_V)Aw,x)
=\max_{y}\cos\angle((I-P_V)AVy,x)
\end{equation}
is an eigenvector $y_{opt}$
of the range restricted definite pair (\ref{pair2}) associated with
its largest eigenvalue $\mu_{opt}$, and the optimal expansion
vector $w_{opt}=Vy_{opt}$ up to scaling.

Notice that $V^HA^Hx_{\perp}x_{\perp}^HAV\big|_{\mathcal{R}(V^HA^HV_{\perp})}$
is a rank-one Hermitian positive semi-definite matrix, and denote
\begin{equation}\label{bmatrix}
V^HA^HV_{\perp}V_{\perp}^HAV\big|_{\mathcal{R}(V^HA^HV_{\perp})}
=B\big|_{\mathcal{R}(V^HA^HV_{\perp})}.
\end{equation}
Therefore,
the range restricted Hermitian definite pair (\ref{pair2})
has exactly {\em one} positive eigenvalue, and the
other eigenvalues are {\em zeros}.
By (\ref{bmatrix}), the eigenvalues $\mu$
of the matrix pair (\ref{pair2}) are identical to those of
the rank-one Hermitian matrix
\begin{equation}\label{stand}
(B\big|_{\mathcal{R}(V^HA^HV_{\perp})})^{-\frac{1}{2}}
V^HA^Hx_{\perp}x_{\perp}^HAV (B\big|_{\mathcal{R}
(V^HA^HV_{\perp})})^{-\frac{1}{2}},
\end{equation}
and the eigenvectors $y$ of the pair (\ref{pair2}) are related to
the eigenvectors $\hat{y}$ of the above matrix by
\begin{equation}\label{y}
y=(B\big|_{\mathcal{R}(V^HA^HV_{\perp})})^{-\frac{1}{2}}\hat{y}.
\end{equation}
It is known that the unique positive eigenvalue $\mu_{opt}$ of
the matrix in (\ref{stand}) is
\begin{equation}\label{muopt}
\mu_{opt}=\|(B\big|_{\mathcal{R}(V^HA^HV_{\perp})})^{-\frac{1}{2}}V^HA^Hx_{\perp}\|^2
\end{equation}
and its associated (unnormalized) eigenvector is
$$
\hat{y}_{opt}=(B\big|_{\mathcal{R}(V^HA^HV_{\perp})})^{-\frac{1}{2}} V^HA^Hx_{\perp}.
$$
Substituting this into (\ref{y}) and $w_{opt}=Vy_{opt}$
gives the first relation in the right-hand side of (\ref{exactwopt}).
Exploiting $P_V=VV^H$ and $I-P_V=V_{\perp}V_{\perp}^H$, by the definition
of Moore--Penrose generalized inverse, we obtain the second relation in the
right-hand side of (\ref{exactwopt}).

By (\ref{Res}), the residual
\begin{equation}\label{res}
R=(I-P_V)AV,
\end{equation}
which shows that $(I-P_V)AP_V=RV^H$. As a result, we have
\begin{equation}\label{vrinv}
((I-P_V)AP_V)^{\dagger}=VR^{\dagger},
\end{equation}
which, together with the second relation in (\ref{exactwopt}), proves
\begin{equation}\label{wopt}
w_{opt}=VR^{\dagger}x,
\end{equation}
i.e., the third relation in (\ref{exactwopt}). Therefore, from (\ref{res})
and (\ref{wopt}),
we obtain
$$
(I-P_V)Aw_{opt}=(I-P_V)AVR^{\dagger}x=RR^{\dagger}x,
$$
which establishes (\ref{aw}).

Since
$$
\mathcal{V}_{w_{opt}}=\mathcal{V}+{\rm span}\{Aw_{opt}\}
=\mathcal{V}\oplus {\rm span}\{(I-P_V)Aw_{opt}\},
$$
it follows from (\ref{aw}) that (\ref{vwopt}) holds.

Relations (\ref{aw}) and (\ref{vwopt}) show that
the unit length $RR^{\dagger}x/\|RR^{\dagger}x\|$ and the columns of
$V$ form an orthonormal basis of the optimally expanded subspace
$
\mathcal{V}_{w_{opt}}=\mathcal{V}\oplus {\rm span}\{RR^{\dagger}x\},
$
and the orthogonal projector onto $\mathcal{V}_{w_{opt}}$ is
\begin{equation}\label{orthw}
P_V+RR^{\dagger}xx^HRR^{\dagger}/\|RR^{\dagger}x\|^2.
\end{equation}
Notice that $RR^{\dagger}$ itself is the orthogonal projector onto
${\rm span}\{R\}$. By right-multiplying the orthogonal projector
in (\ref{orthw}) with $x$ and exploiting $(RR^{\dagger})^2=RR^{\dagger}$,
it is easily justified that
the orthogonal projection of $x$ onto $\mathcal{V}_{w_{opt}}$
is $P_Vx+RR^{\dagger}x$, which proves
$$
\cos\angle(\mathcal{V}_{w_{opt}},x)=\cos\angle(P_Vx+RR^{\dagger}x,x),
$$
i.e., (\ref{orthwopt}) holds.

Note that $V^HR=0$ and the orthogonal projector onto $\mathcal{V}_R$ is
$P_V+RR^{\dagger}$. Therefore, $P_Vx+RR^{\dagger}x=(P_V+RR^{\dagger})x$
is the orthogonal projection of $x$ onto $\mathcal{V}_R$ defined
by (\ref{vr}), and from (\ref{orthwopt}) we obtain (\ref{vwvr}).
\end{proof}

\begin{Rem}
Since $\mathcal{V}_R\supset \mathcal{V}_{w_{opt}}$,
{\rm (\ref{orthwopt})} and {\rm (\ref{vwvr})} show that the best approximation
to $x$ from $\mathcal{V}_{w_{opt}}$ is identical to that from
the {\em higher} dimensional $\mathcal{V}_R$ containing $\mathcal{V}_{w_{opt}}$.
Remarkably, $\mathcal{V}_R$ is independent of $x$
but provides the same best approximation to $x$ as
$\mathcal{V}_{w_{opt}}$ does, and
all the other optimally expanded subspaces for eigenvectors $z$ of $A$
other than $x$ are also contained in $\mathcal{V}_R$ since $RR^{\dagger}z\in
{\rm span}\{R\}$. Therefore, unlike the a priori $\mathcal{V}_{w_{opt}}$
that favors $x$ only, $\mathcal{V}_R$
favor all the eigenvectors of $A$ equally, and
it is possible to use a suitable projection method to simultaneously compute
approximations to $x$ and other eigenvectors of
$A$ with respect to $\mathcal{V}_R$ rather than $\mathcal{V}_{w_{opt}}$.
We will come back to this point in the end of next section.
\end{Rem}

\begin{Rem}
It is seen from {\rm (\ref{wopt})} that the components of $R^{\dagger}x$ are the
coefficients of $w_{opt}$ in the basis $V$, while those of $V^Hx$ are the
coefficients of $P_Vx=VV^Hx$ in the basis $V$. Notice from {\rm (\ref{res})} that
$R^HV=0$. Then $R^{\dagger}V=0$ as $\mathcal{N}(R^{\dagger})=\mathcal{N}(R^H)$.
Relations $R^{\dagger}V=0$ and $V^HV=I$ show that $R^{\dagger}$ and $V^H$ are
different, so that $R^{\dagger}x$ and $V^Hx$ are not colinear generally.
As a result, taking $w=P_Vx$ and expanding $\mathcal{V}$ by
$(I-P_V)AP_Vx$ are not theoretically optimal in general.
In other words, although $P_Vx/\|P_Vx\|$ is the best approximation
to $x$ from $\mathcal{V}$,
relation {\rm (\ref{wopt})} shows that generally $w_{opt}\not=P_Vx$ up to scaling.
\end{Rem}

Finally, let us look at the particular case $\rank(R)=1$.
In this case, we can write $R$ as $R=\hat{v}u^H$
with $\|u\|=1$ and $\|R\|=\|\hat{v}\|$. Therefore,
\begin{equation}\label{scase}
V_{\perp}^HAV=V_{\perp}^HR=(V_{\perp}^H\hat{v}) u^H
\end{equation}
is a rank-one matrix. 
Let $w=Vy$. Exploiting (\ref{scase}),
$I-P_V=V_{\perp}V_{\perp}^H$, $I-P_V=(I-P_V)^2$, and
$\|(I-P_V)\hat{v}\|=\|V_{\perp}^H\hat{v}\|$, by some elementary
manipulation, we can prove that
\begin{eqnarray*}
\cos\angle((I-P_V)AVy,x)
&=&\cos\angle((I-P_V)\hat{v},x)
\end{eqnarray*}
for any $y$ satisfying $u^Hy\neq 0$.
Therefore, $w_{opt}=Vy$ for any $y$ satisfying $u^Hy\neq 0$,
and there are infinitely many $w_{opt}$'s.
However, $(I-P_V)Aw_{opt}=(I-P_V)\hat{v}(u^Hy)$
is unique
up to scaling for all $y$ satisfying $u^Hy\not=0$, and
$\mathcal{V}_{w_{opt}}$ is unique since
$$
(I-P_V)Aw_{opt}/\|(I-P_V)Aw_{opt}\|=(I-P_V)\hat{v}/\|(I-P_V)\hat{v}\|
$$
is unique and it, together with the columns of $V$, forms
an orthonormal basis of $\mathcal{V}_{w_{opt}}$.

\section{Computable optimal replacements of $(I-P_V)Aw_{opt}$ and
computable optimally expanded subspaces $\mathcal{V}_{\widetilde{w}_{opt}}$}

The results in Theorem~\ref{optimal} are a priori
and are not directly applicable to a practical expansion
of $\mathcal{V}$ as they involve the desired eigenvector $x$.
Therefore, for a non-Krylov subspace $\mathcal{V}$ with $\rank(R)\geq 2$,
one cannot use $w_{opt}$ in (\ref{expr1}), i.e.,
the third result of (\ref{exactwopt}), or a good approximation to it from
$\mathcal{V}$
to expand $\mathcal{V}$ in computations, as
we have elaborated in the introduction.
However, just as (\ref{id}) indicates, as far as the subspace
expansion is concerned,
it is $(I-P_V)Aw_{opt}$ rather than $w_{opt}$ itself that is used to expand $\mathcal{V}$. Therefore, instead of $w_{opt}$ itself,
the key is to consider $(I-P_V)Aw_{opt}$ as a whole when expanding $\mathcal{V}$.
This new perspective forms our basis of this section.

Theoretically, for a given subspace $\mathcal{V}$, the unit length vector
$P_Vx/\|P_Vx\|$ is the {\em theoretical} best approximation to $x$ from
$\mathcal{V}$. From the computational viewpoint, because of
$P_Vx/\|P_Vx\|\in \mathcal{V}$, projection methods are only
viable choices that seek {\em computable} best
approximations to $x$ from $\mathcal{V}$ using their
own extraction approaches. It has been commonly known from,
e.g., \cite{bai2000templates,parlett1998symmetric,saad1992eigenvalue,
stewart2001eigensystems,vandervorst2002eigenvalue}, that
the standard Rayleigh--Ritz, harmonic Rayleigh--Ritz, refined
Rayleigh--Ritz, and refined harmonic Rayleigh--Ritz methods
compute the Ritz, harmonic Ritz, refined Ritz, and refined harmonic
Ritz vectors of $A$ from a given subspace $\mathcal{V}$
and use them to approximate the desired $x$.
For the given $\mathcal{V}$, these approximate eigenvectors
are {\em computationally} the best approximations
to $x$ that these four kinds of projection methods can obtain,
and one cannot do better once a projection method is chosen. In other words,
these approximations are
computable optimal replacements of the a priori best approximation
$P_Vx/\|P_Vx\|$ to $x$ from $\mathcal{V}$ within
the framework of the four kinds of projection methods, respectively.
This naturally motivates us to introduce the following definition.

\begin{Def}\label{def}
For a chosen projection method, the approximation to $x$
extracted by it from $\mathcal{V}$ is called the computable optimal
replacement of $P_Vx/\|P_Vx\|$.
\end{Def}

For a chosen projection method, in terms of this definition,
for a chosen projection method,
we can fully exploit the theoretical results in Section 2
to expand $\mathcal{V}$ in its own computable optimal way.
Notice that $RR^{\dagger}$
is the orthogonal projector onto the subspace ${\rm span}\{R\}$ and $RR^{\dagger}x/\|RR^{\dagger}x\|$
is the theoretical best approximation to $x$ from ${\rm span}\{R\}$, and
write $v_{opt}=RR^{\dagger}x/\|RR^{\dagger}x\|$.
Then the columns of $(V,v_{opt})$ form an orthonormal basis
of $\mathcal{V}_{w_{opt}}$ in (\ref{vwopt}). Keep these results in mind. Then regarding the optimal expansion $v_{opt}$,
a chosen projection method obtains its computable best approximation
to $x$ from ${\rm span}\{R\}$, and we take it as
the computable optimal replacement of $v_{opt}$ to
$x$ from ${\rm span}\{R\}$ or, equivalently,
$(I-P_V)Aw_{opt}=RR^{\dagger}x$ in direction. We then use such a replacement,
to construct the corresponding computable optimally expanded subspace.

Specifically, by Definition~\ref{def},
for the standard Rayleigh--Ritz method, the harmonic Rayleigh--Ritz method,
the refined Rayleigh--Ritz method and the refined harmonic Rayleigh--Ritz methods,
the Ritz vector, the harmonic Ritz vector, the refined Ritz vector,
and the refined harmonic Ritz vector of $A$ from ${\rm span}\{R\}$ obtained by them
are the corresponding computable optimal replacements of the theoretical best
approximation $v_{opt}$ to $x$ from ${\rm span}\{R\}$.

In what follows we show how to efficiently and reliably achieve this goal
in computations.
Before proceeding, let us investigate (\ref{aw}) and get more insight
into the optimal subspace expansion. We present the following result.

\begin{Thm}
Assume that $(V,V_{\perp})$ is unitary. Then it holds that
\begin{equation}\label{rv}
\|RR^{\dagger}x\|\leq \|V_{\perp}V_{\perp}^Hx\|=\|(I-P_V)x\|.
\end{equation}
If $RR^{\dagger}x=V_{\perp}V_{\perp}^Hx$, then $x\in \mathcal{V}_{w_{opt}}$ and
the subspace expansion terminates.
\end{Thm}

\begin{proof}
Since $V^HR=0$, $V^HV_{\perp}=0$ and
$(V,V_{\perp})$ is unitary, we have ${\rm span}\{R\}\subseteq
{\rm span}\{V_{\perp}\}$, which proves (\ref{rv}) by noticing
that $RR^{\dagger}$ and $V_{\perp}V_{\perp}^H$ are the
orthogonal projectors onto ${\rm span}\{R\}$ and ${\rm span}\{V_{\perp}\}$,
respectively.

For $R\in\mathcal{C}^{n\times k}$, since ${\rm span}\{R\}\subseteq
{\rm span}\{V_{\perp}\}$, $\rank(R)=k_1\leq k$ and $\rank(V_{\perp})=n-k$,
we must have $\rank(R)<n-k$ when $k<n-k$.
On the other hand, if $k$ is sufficiently large
such that $k>n-k$, then $R$ must be rank deficient
as ${\rm span}\{R\}\subseteq{\rm span}\{V_{\perp}\}$ unconditionally.
Therefore, the theoretical optimal expansion $RR^{\dagger}x$ is part of
$(I-P_V)x$.
Notice that $x=P_Vx+(I-P_V)x$ and $P_Vx\in\mathcal{V}$,
Therefore,  if $RR^{\dagger}x=(I-P_V)x=V_{\perp}V_{\perp}^Hx$, then $x\in \mathcal{V}_{w_{opt}}$ already and
the subspace expansion terminates.
\end{proof}

Observe that, for all the afore-mentioned four kinds
of projection methods, the computable optimal replacements of
$v_{opt}$ are orthogonal to $\mathcal{V}$
because they lie in ${\rm span}\{R\}$ and $V^HR=0$.
Write the unit length approximate eigenvector obtained
by any chosen projection method applied to ${\rm span}\{R\}$
as $\widetilde{x}_R$, and take $\widetilde{v}_{opt}:=\widetilde{x}_R$.
Then the columns of $(V,\widetilde{v}_{opt})$ forms an orthonormal
basis of the computable optimally
expanded subspace, denoted by $\mathcal{V}_{\widetilde{w}_{opt}}$ hereafter,
where $\widetilde{w}_{opt}$ is a
corresponding replacement of the optimal expansion vector $w_{opt}$
and is not required in computations.

Mathematically, we can derive an explicit expression of
$\widetilde{w}_{opt}$ in terms of $\widetilde{v}_{opt}$, as the
following result shows.

\begin{Thm}
It holds that
\begin{equation}\label{tildewopt}
\widetilde{w}_{opt}=
V(V_{\perp}^HAV)^{\dagger}V_{\perp}^H\widetilde{v}_{opt}=VR^{\dagger}
\widetilde{v}_{opt}.
\end{equation}
\end{Thm}

\begin{proof}
Since
$\widetilde{w}_{opt}\in \mathcal{V}$, we have $P_V\widetilde{w}_{opt}=
\widetilde{w}_{opt}$, which leads to the equation
\begin{equation}\label{tildew}
(I-P_V)A\widetilde{w}_{opt}=(I-P_V)AP_V\widetilde{w}_{opt}
=\widetilde{v}_{opt}.
\end{equation}
It then follows from $((I-P_V)AP_V)^{\dagger}
=V(V_{\perp}^HAV)^{\dagger}V_{\perp}^H$
and (\ref{vrinv}) that (\ref{tildewopt}) holds.
\end{proof}

Compared with the expressions (\ref{exactwopt}) and (\ref{wopt})
of $w_{opt}$, we have replaced $V_{\perp}^Hx$ and
$R^{\dagger}x$ by $V_{\perp}^H\widetilde{v}_{opt}$
and $R^{\dagger}\widetilde{v}_{opt}$ in the expressions of $\widetilde{w}_{opt}$,
respectively. We should remind the reader that the solution $\widetilde{w}_{opt}$
to (\ref{tildew}) may not be unique and $\widetilde{w}_{opt}$ in
(\ref{tildewopt}) is the minimum 2-norm one.

Next we focus on some computational details on
the computable optimal subspace expansion approaches
that correspond to the four kinds of projection methods.

Suppose that $(V,\widetilde{v}_{opt})$ is available.
One goes to the next iteration, and
performs any one of the four projection methods. They all need to
form the matrix
\begin{equation}\label{projm}
(V,\widetilde{v}_{opt})^HA(V,\widetilde{v}_{opt})
\end{equation}
explicitly. This can be efficiently done in an updated way.
For brevity, hereafter suppose that all the quantities, such as $A$, $V$
and $\widetilde{v}_{opt}$, are all real when counting the computational cost.
For $V\in\mathcal{R}^{n\times k}$, the matrices $V^HAV$ and $AV$ are already
available at iteration $k$. To form the matrix in (\ref{projm}),
one needs to compute one matrix-vector $A\widetilde{v}_{opt}$,
and $2k+1$ vector inner products $V^H(A\widetilde{v}_{opt})$,
$\widetilde{v}_{opt}^H(AV)$ and $\widetilde{v}_{opt}^H(A\widetilde{v}_{opt})$.
The total cost is one matrix-vector product and $4nk+2n\approx 4nk$ flops.

The situation changes for other expansion vectors $w$, which include those used in
the Lanczos or Arnoldi type expansion approach, the Ritz expansion approach, i.e., the
mathematically equivalent RA method,
and the refined Ritz expansion approach.
When expanding $\mathcal{V}$ to $\mathcal{V}_w$, we need to compute $Aw$ first,
which costs one matrix-vector product, and then
to orthogonalize $Aw$ against $V$ to obtain
the unit length vector $v_w$, which costs approximately
$4nk$ flops when the (modified) Gram--Schmidt orthogonalization procedure
is used. In finite precision arithmetic, some sort of reorthogonalization
strategy may be required so as to ensure the numerical orthogonality
between $v_w$ and $V$. This will increase
the orthogonalization cost up to maximum $8nk$ flops when the complete
reorthogonalization is used. After $v_w$ is obtained,
one forms the projection matrix similar to that in (\ref{projm}),
where $\widetilde{v}_{opt}$ is replaced by $v_w$.
In contrast, once $\widetilde{v}_{opt}$
is available, the afore-described computable optimal subspace
expansion approach does not perform such a Gram--Schmidt orthogonalization
process and thus saves one matrix-vector product and $4nk\sim 8nk$ flops.

We now consider how to obtain
a computable optimal replacement $\widetilde{v}_{opt}$ in detail and count
its computational cost. First, we need to form the residual
$R$, which can be recursively
updated efficiently, as shown below.
Notice that $AV$ and $V^HAV$ are already available, and write
$R=R_k=(\widetilde{R}_{k-1},r_k)$
and $V=V_k=(V_{k-1},v_k)$ at iteration $k$. Then
\begin{eqnarray*}
\widetilde{R}_{k-1}&=&AV_{k-1}-V_{k-1}(V_{k-1}^HAV_{k-1})-v_k(v_k^HAV_{k-1})\\
&=&R_{k-1}-v_k(v_k^HAV_{k-1}),\\
r_k&=&Av_k-V_{k-1}(V_{k-1}^HAv_k)-(v_k^HAv_k)v_k,
\end{eqnarray*}
where $R_{k-1}$ is the residual at iteration $k-1$.
Since $R_{k-1}$ and the matrices in parentheses are
already available when forming $V_k^HAV_k$, the above updates of
$\widetilde{R}_{k-1}$ and $r_k$ approximately cost $2nk$ flops and $2nk$,
respectively, so that the total cost is approximately $4nk$ flops.

With $R$ available, we need to
construct an orthonormal basis matrix $Q$ of ${\rm span}\{R\}$,
so that ${\rm span}\{R\}={\rm span}\{Q\}$.
One can compute $Q$ in a number of
ways. For instance, the QR factorization with column pivoting, which costs
$4nkk_1 - 2k_1^2 (n + k) + 4k_1^3/3$ flops with $\rank(R)=k_1$,
approximately ranging from $8nk$ to $2nk^2$ flops
for the smallest $k_1=2$
and biggest $k_1=k$ \cite[pp. 302]{golub2013}, and the rank-revealing QR
factorization \cite{gu1996}, which costs nearly the same as
the QR factorization with column pivoting for $k\ll n$. The most reliable
but relatively expensive approach is the Chan R-SVD algorithm, which approximately
costs $6nk^2$ flops \cite[pp. 493]{golub2013}. Once $Q$ is computed,
we obtain
\begin{equation}\label{RQ}
RR^{\dagger}x=QQ^Hx.
\end{equation}

We point out that $\rank(R)=k_1<k$ is possible as $R$ may be (numerically)
rank deficient. 
Suppose that $\|R\|$ is not small, that is,
the columns of $V$ do not span a good approximate invariant subspace of $A$. Then
$R$ must be numerically rank deficient as one of the
$k$ Ritz pairs with respect to $\mathcal{V}$ converges to
some eigenpair, e.g., $(\lambda,x)$, as shown below:
Let $\widetilde{x}=Vy$ with $\|y\|=1$
be the Ritz vector approximating $x$ and $\mu$ be the
corresponding Ritz value, that is, $(\mu,y)$ is an
eigenpair of the projection matrix $V^HAV$. Then its associated
residual norm is $\|Ry\|=\|A\widetilde{x}-\mu\widetilde{x}\|$.
Whenever $\|Ry\|\leq tol$ with $tol$ sufficiently small,
$R$ must be numerically rank deficient since its smallest
singular value $\sigma_{\min}(R)\leq \|Ry\|\leq tol$.
Moreover, if some $k-k_1$ Ritz pairs converge, i.e., $\|Ry\|\leq tol$
for $k-k_1$ corresponding $y$'s, then
the numerical rank of $R$ equals $k_1$ since these $y$'s are linearly independent.

Indeed, we have observed in numerical experiments that
the numerical rank $k_1$ of $R$ satisfies $k_1\leq k-1$ and
becomes much smaller than $k$ as $k$ increases,
in the sense that $R$ has exactly $k-k_1$ tiny singular value(s) no more than
the level of $\|R\|\epsilon_{\rm mach}$ with $\epsilon_{\rm mach}$
being the machine precision.

With $Q$ at hand, for each of the four projection
methods, we can compute the corresponding
eigenvector approximation to $x$ from ${\rm span}\{Q\}$,
which is the corresponding computable optimal
replacement of $QQ^Hx/\|QQ^Hx\|$. Taking the standard
Rayleigh--Ritz method as an example, we need to form $Q^HAQ$,
whose eigenvalues are the Ritz values of $A$ with respect to ${\rm span}\{Q\}$.
Suppose that $Q$ is real, and notice that $Q\in\mathcal{R}^{n\times k_1}$.
Then we need $k_1$ matrix-vector products $AQ$ and $2nk_1^2$ flops to
compute $Q^HAQ$.
We compute the eigendecomposition of $Q^HAQ$ using the QR algorithm,
which costs approximately $25k_1^3$ flops for $A$ general and $9k_1^3$
flops for $A$ real symmetric. The desired Ritz vector
$\widetilde{v}_{opt}=Qy$ with $y$ being the unit length eigenvector of $Q^HAQ$
associated with its eigenvalue approximating
the desired eigenvalue $\lambda$.

In summary, suppose that the QR factorization with column pivoting
is used to compute $Q$. When computing $\widetilde{v}_{opt}$ and using it
to construct an orthonormal basis matrix $(V,\widetilde{v}_{opt})$
of $\mathcal{V}_{\widetilde{w}_{opt}}$,
at iteration $k$, we need $k_1$ matrix-vectors plus approximate
$2nk^2+2nk^2=4nk^2$ flops for $k_1\approx k$ and
$8nk_1+2nk_1^2\approx 2nk_1^2$ flops for $k_1\ll k$. Remarkably,
however, it is worthwhile to
point out that the $k_1$ matrix-vector products is the matrix-matrix
product $AQ$, which, together with $Q^H(AQ)$, should be computed
by using much more efficient level-3 BLAS operations.

For those subspace expansion approaches such as the Lanczos or
Arnoldi type expansion approach, and the Ritz expansion approach and the refined
Ritz expansion approach that are mathematically equivalent to
the RA method and the refined RA method,
when using the corresponding $w$ to
form $Aw$ and orthonormalize it against $V$ to expand
$\mathcal{V}$ to $\mathcal{V}_w$, we need one
matrix-vector product and $4nk\sim 8nk$ flops. Therefore,
the main cost of the $k$-step subspace expansion approach
is $k$ matrix-vector products and $2nk^2\sim 4nk^2$ flops. However,
successively updating $AV$ and
$V^H(AV)$ consists of level-2 BLAS and level-1 BLAS operations.
In contrast, since the matrix-matrix products
$AQ$ and $Q^HAQ$ are computed using
level-3 BLAS operations, the computable optimal
subspace expansion approach is much more
efficient than the $k$-step usual subspace expansion one even if
$k_1\approx k$.


Finally, let us return to (\ref{orthwopt}),
(\ref{vwvr}), and Remark 4 in Section 2. Relations
(\ref{orthwopt}) and (\ref{vwvr}) have proved that
the best approximations to $x$ from $\mathcal{V}_{w_{opt}}$
and $\mathcal{V}_R$ are the same and equal to
$(P_V+QQ^H)x/\|(P_V+QQ^H)x\|$ due to (\ref{RQ}). Previously, we have
proposed the expansion approach that obtains computable optimal
replacements of $v_{opt}=QQ^Hx/\|QQ^Hx\|$ from ${\rm span}\{R\}$ and
then expands $\mathcal{V}$ to $\mathcal{V}_{\widetilde{w}_{opt}}$,
from which we compute new approximate eigenpairs in the next
iteration. In the proposed approach, we first
compute $Q$, and then form $AQ$ and $Q^HAQ$. Nevertheless,
although the orthogonal projections of $x$ onto $\mathcal{V}_{w_{opt}}$
and $\mathcal{V}_R$ are identical, the computable optimal
approximations to $x$ from $\mathcal{V}_{w_{opt}}$
and $\mathcal{V}_R$ are {\em not} the same.
Furthermore, the approximate eigenpairs obtained by
the four kinds of projection methods applied to $\mathcal{V}_R$
are generally more accurate than the corresponding counterparts applied to
$\mathcal{V}_{w_{opt}}$ since $\mathcal{V}_{w_{opt}}\subset
\mathcal{V}_R$. The same conclusions also hold
when $\mathcal{V}_{w_{opt}}$ is replaced by $\mathcal{V}_{\widetilde{w}_{opt}}$
since $\mathcal{V}_{\widetilde{w}_{opt}}\subset \mathcal{V}_R$.

Regarding the cost, recall that the proposed computable
optimal expansion approaches have
computed $Q$, which, together with $V$, form an orthonormal basis
of $\mathcal{V}_R$. Therefore, we have already expanded
$\mathcal{V}$ to $\mathcal{V}_R$.
Naturally, this motivates us to compute possibly
better eigenpair approximations
of $(\lambda,x)$ with respect to the higher dimensional $\mathcal{V}_R$.
This results in a new subspace expansion approach. In the next iteration,
we need to form the projection matrix $(V,Q)^HA(V,Q)$. Since
$AV$ and $AQ$ are already available in the previously
proposed computable optimal expansion approaches, the extra cost is
the computation of $V^H(AQ)$ and $Q^H(AV)$, which can be
done using efficient level-3 BLAS operations.

More importantly, we can get more insight into the computable
optimal subspace expansion approaches.
They construct approximations $\mathcal{V}_{\widetilde{w}_{opt}}$'s to
the optimal $\mathcal{V}_{w_{opt}}$, but, in subsequent
expansion iterations, they expand
these approximations themselves rather than the optimal $\mathcal{V}_{w_{opt}}$.
In other words, only in the first expansion iteration,
both theoretical and computable optimal subspace expansion approaches
start with the {\em same} subspace and expand it. After that, at each subsequent expansion iteration, they expand {\em different} subspaces;
the theoretical optimal expansion approach
always expand the optimal subspace at that iteration, but
the computable optimal expansion approaches expand the
subspaces that {\em are already not optimally expanded} ones, so that
the subsequent expanded subspaces may constantly deviate
from the theoretical optimally ones further and further.
We have indeed observed these considerable phenomena in experiments.

In contrast, expanding $\mathcal{V}$ to $\mathcal{V}_R$ does not involve
$x$ and any other eigenvectors of $A$, and $\mathcal{V}_R$ always contains
the theoretically optimal $\mathcal{V}_{w_{opt}}$.
Therefore, $\mathcal{V}_R$ is optimal both in theory
and computations. Moreover, as we have pointed out in Remark 4 of Section 2,
unlike the a priori $\mathcal{V}_{w_{opt}}$ and its computable optimal
approximations, the subspace $\mathcal{V}_R$ can be used to compute
any other eigenpair(s) of $A$
because $\mathcal{V}_R$ itself contains the theoretical
optimally expanded subspaces $\mathcal{V}_{w_{opt}}$'s for
all the eigenvectors of $A$ at each expansion iteration.
As a result, such an expansion approach, though more
expensive than our previously proposed optimal expansion approaches,
is more robust, and may simultaneously compute more than one eigenpairs of $A$
and obtain better approximations to them.
We do not pursue this
expansion approach in the paper and leave it as future work.

\section{Numerical experiments}

Recall that, for the standard Rayleigh--Ritz and the refined
Rayleigh--Ritz methods, the Ritz vector and the refined Ritz
vector from ${\rm span}\{R_k\}$ are the computable optimal replacements of
$v_{opt}=R_kR_k^{\dagger}x/\|R_kR_k^{\dagger}x\|$
at expansion iteration $k$,
respectively, where $R_k=AV_k-V_k(V_k^HAV_k)$. All the experiments have
been performed on an Intel(R) Core(TM) i7-9700 CPU 3.00GHz
with 16 GB RAM using the Matlab R2018b with
the machine precision $\epsilon_{\rm mach}=2.22\times 10^{-16}$
under the Miscrosoft Windows 10 64-bit system.
We have used the Matlab function {\sf orth} to compute
an orthonormal basis $Q_k$ of ${\rm span}\{R_k\}$.
It is known from, e.g., \cite{jia2004,jia2001analysis,stewart2001eigensystems,vandervorst2002eigenvalue}, that
the refined Ritz vector is generally more accurate and can be
much more accurate than the Ritz vector. We have used the two
computable optimal subspace expansion approaches resulting from the
Ritz vector and the refined Ritz vector from ${\rm span}\{R_k\}$.
We shall report numerical experiments to demonstrate the effectiveness
of the theoretical optimal expansion approach and the above
two computable optimal expansion
approaches. In the meantime, we will compare them with the standard expansion
approach, i.e., the Lanczos or Arnoldi type expansion approach
with $w=v_k$, the last column of $V_k$, and the Ritz expansion
approach \cite{ye2008optimal,wu2014}, i.e., the RA method, with $w$ being the Ritz vector from $\mathcal{V}_k$ at expansion iteration $k$, respectively.

We reiterate that the theoretical optimal subspace expansion approach
cannot be used in computation since it involves the a priori $x$.
Here for a purely comparison purpose, we use it as the reference when
evaluating the effectiveness of the other subspace
expansion approaches and showing
how well the computable optimal subspace expansion approaches work. To this end,
we exploit the Matlab function {\sf eig} to
compute the desired eigenpair $(\lambda,x)$ of a general $A$ whenever it
is not given in advance, which is supposed to be
``exact". With the unit length $x$ available, we are able to
compute the error $\sin\angle(\mathcal{V}_k,x)$,
the distance between $x$ and a given $\mathcal{V}_k$.

For a given $A$, we generate $d$ vectors in a normal distribution,
orthonormalize them to obtain the
orthonormal vectors $v_1,v_2,\ldots,v_d$, and construct the
initial $d$-dimensional subspace
$$
\mathcal{V}_d={\rm span}\{v_1,v_2,\ldots,v_d\}.
$$
We then successively expand it to an $m$-dimensional $\mathcal{V}_m$ using a given
expansion approach. For $d>1$, $\mathcal{V}_d$ is non-Krylov,
so are the expanded subspaces $\mathcal{V}_k$,
$k=d+1,\ldots,m$. All the residual norms of approximate
eigenpairs $(\mu_k,\tilde{x}_k)$ mean the relative residual norms
$$
\frac{\|A \tilde{x}_k-\mu_k \tilde{x}_k\|}{\|A\|_1},\ k=d+1,\ldots,m
$$
where $\|A\|_1$ denotes the 1-norm of $A$ and $(\mu_k,\tilde{x}_k)$ is
the approximate eigenpair at iteration $k$ with $\mu_k$ being the Ritz value
and the unit length $\tilde{x}_k$ being the Ritz vector or the refined
Ritz vector when the standard Rayleigh--Ritz method or
the refined Rayleigh--Ritz method is applied to $\mathcal{V}_k,\ k=d+1,\ldots,m$.

We test two symmetric matrices and one unsymmetric matrix.
The first symmetric
$A=\diag(1,\frac{1}{2},\ldots,\frac{1}{n})$ with $n=10000$,
whose small eigenvalues are clustered. We compute the smallest
$\lambda=\frac{1}{n}$ and the corresponding eigenvector $x=e_n$,
the last column of $I$ with order $n$.
The second symmetric matrix is
the Strak\v{o}s matrix \cite[pp. XV]{meurant}, which is
diagonal with the eigenvalues labeled in the descending order:
\begin{equation}\label{relres}
  \lambda_i = \lambda_1 + \left(\frac{i-1}{n-1}\right)(\lambda_n-\lambda_1)
  \rho^{n-i},
\end{equation}
$i = 1, 2, \ldots, n$
and is extensively used to test the behavior of the Lanczos algorithm.
The parameter $\rho$ controls the eigenvalue distribution.
The large eigenvalues of $A$ are clustered for $\rho$ closer
to one and better separated for $\rho$ smaller. We compute
the largest $\lambda_1$ and the corresponding eigenvector
$x=e_1$, the first column of $I$. In the experiment,
we take $n=10000$, $\lambda_1=8$, $\lambda_n=-2$, and $\rho=0.99$.

The test unsymmetric matrix is {\sf cry2500} of $n=2500$
from the non-Hermitian Eigenvalue Problem Collection in the Matrix
Market\footnote{https://math.nist.gov/MatrixMarket}.
We are interested in the eigenvalue with the largest real part, which is clustered
with some others, and the corresponding eigenvector $x$. For these three
eigenvalue problems,
we have tested $d=5,10,15,20,25$ and expand $\mathcal{V}_d$ to $\mathcal{V}_k$
for $m=100,150, 200, 300$ when comparing the different
subspace expansion approaches.
We only report the results on $d=20$ and $m=200$ since we have observed very similar
phenomena for the other $d$'s and $m$'s.

For these three test problems, Figures~\ref{symm}--\ref{unsymm} depict the
decaying curves of the errors
\begin{equation}\label{sinxk}
\sin\angle(\mathcal{V}_k,x)
=\|(I-V_kV_k^H)x\|=\|x-V_k(V_k^Hx)\|,\ k=d+1,\ldots,m,
\end{equation}
obtained by the five subspace expansion approaches and
the residual norms (\ref{relres}) of the approximate eigenpairs obtained by
the standard Rayleigh--Ritz method and the refined Rayleigh--Ritz
method with respect to corresponding subspaces,
respectively.

Figure~\ref{symm}a, Figure~\ref{symm}c, and Figure~\ref{unsymm}a draw
$\sin\angle(\mathcal{V}_k,x)$'s for $k=d+1,\ldots,m$.
At expansion iteration $k=d,d+1,\ldots,m-1$, ``stand" denotes
the standard subspace expansion approach using $w=v_k$, ``RitzR"
and ``r-RitzR" denote
the subspace expansion approaches using the Ritz vector and
the refined Ritz vector from the corresponding ${\rm span}\{R_k\}$, respectively,
``RitzV" is the subspace expansion approach \cite{wu2014,ye2008optimal}
using the Ritz vector from the corresponding
$\mathcal{V}_k$, and ``optimal" denotes the optimal subspace expansion approach
using $R_kR_k^{\dagger}x/\|R_kR_k^{\dagger}x\|$.

Figure~\ref{symm}b, Figure~\ref{symm}d, and Figure~\ref{unsymm}b
depict relative residual norms (\ref{relres}) of approximate eigenpairs.
At expansion iteration $k=d,d+1,\ldots,m-1$,
``stand", ``RitzR'', ``RitzV", and ``optimal-R" perform the standard Rayleigh--Ritz
method on the resulting expanded subspaces $\mathcal{V}_{k+1}$ using the
subspace expansion approaches ``stand'', ``RitzR", ``RitzV", and ``optimal",
but ``r-RitzR" and ``optimal-RR" perform the refined Rayleigh--Ritz
method on the corresponding $\mathcal{V}_{k+1}$ using
the subspace expansion approaches ``r-RitzR", and ``optimal", respectively.
Particularly, we remind the reader that ``optimal-R" and ``optimal-RR" work
on the {\em same} optimally expanded subspace
$\mathcal{V}_{k+1}$ at each expansion iteration $k=d,d+1,\ldots,m-1$.

\begin{figure}[!htp]
\begin{minipage}{0.48\linewidth}
  \centerline{\includegraphics[width=4.8cm,height=4cm]{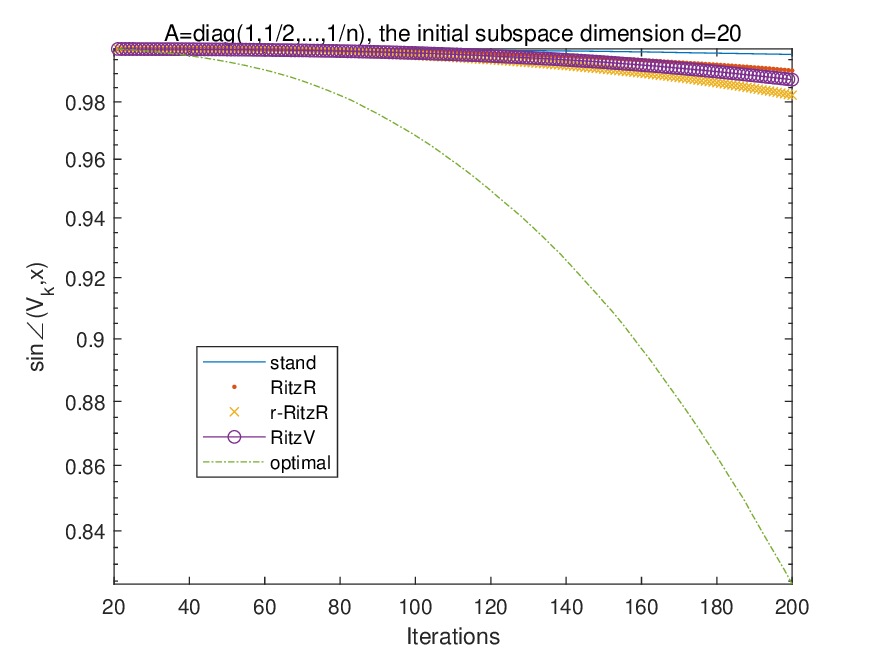}}
  \centerline{(a)}
\end{minipage}
\hfill
\begin{minipage}{0.48\linewidth}
  \centerline{\includegraphics[width=4.8cm,height=4cm]{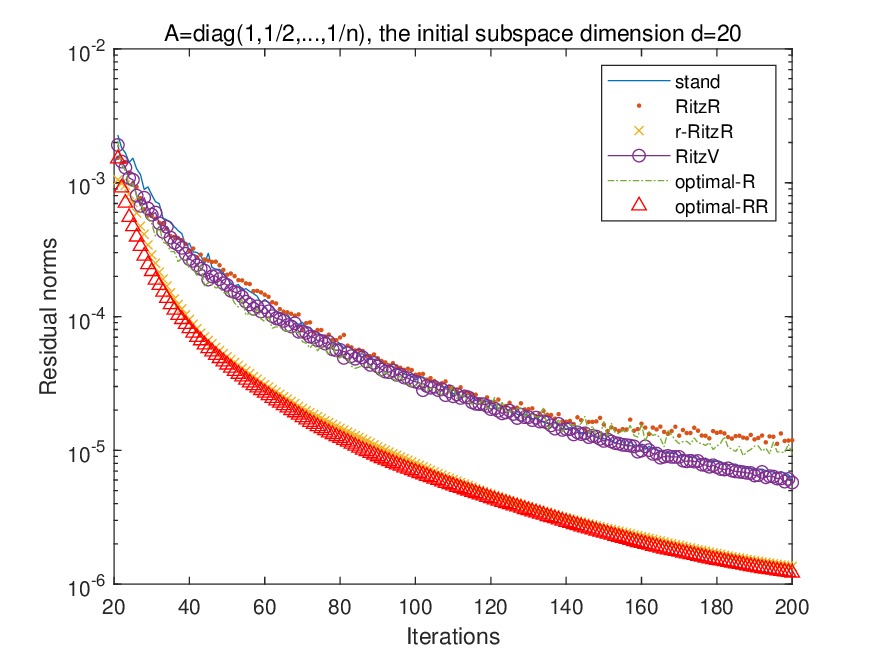}}
  \centerline{(b)}
\end{minipage}
\vfill
\begin{minipage}{0.48\linewidth}
  \centerline{\includegraphics[width=4.8cm,height=4cm]{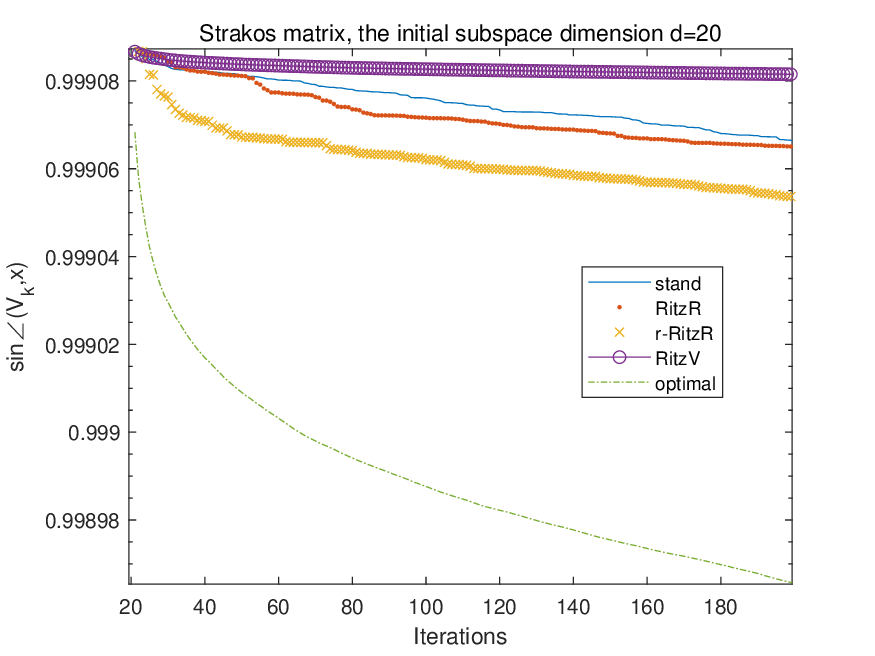}}
  \centerline{(c)}
\end{minipage}
\hfill
\begin{minipage}{0.48\linewidth}
  \centerline{\includegraphics[width=4.8cm,height=4cm]{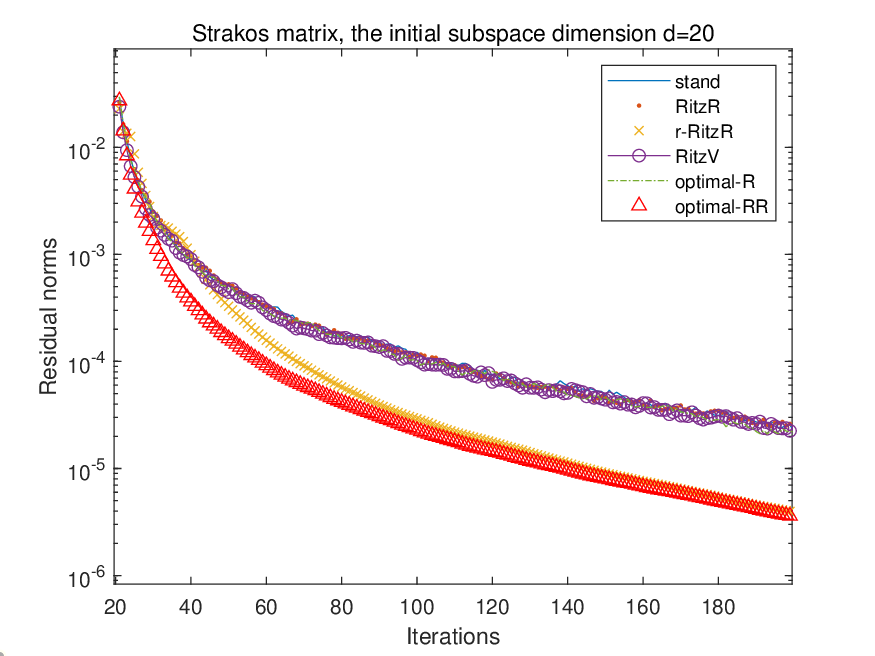}}
  \centerline{(d)}
\end{minipage}
\caption{The symmetric problems.}
\label{symm}
\end{figure}

\begin{figure}[!htp]
\begin{minipage}{0.48\linewidth}
  \centerline{\includegraphics[width=4.8cm,height=4cm]{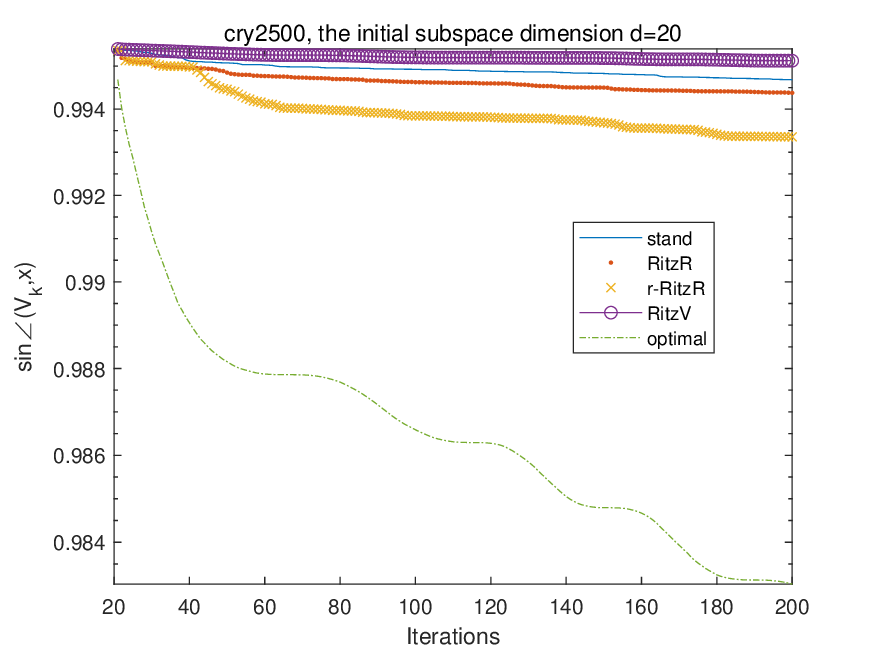}}
  \centerline{(a)}
\end{minipage}
\hfill
\begin{minipage}{0.48\linewidth}
  \centerline{\includegraphics[width=4.8cm,height=4cm]{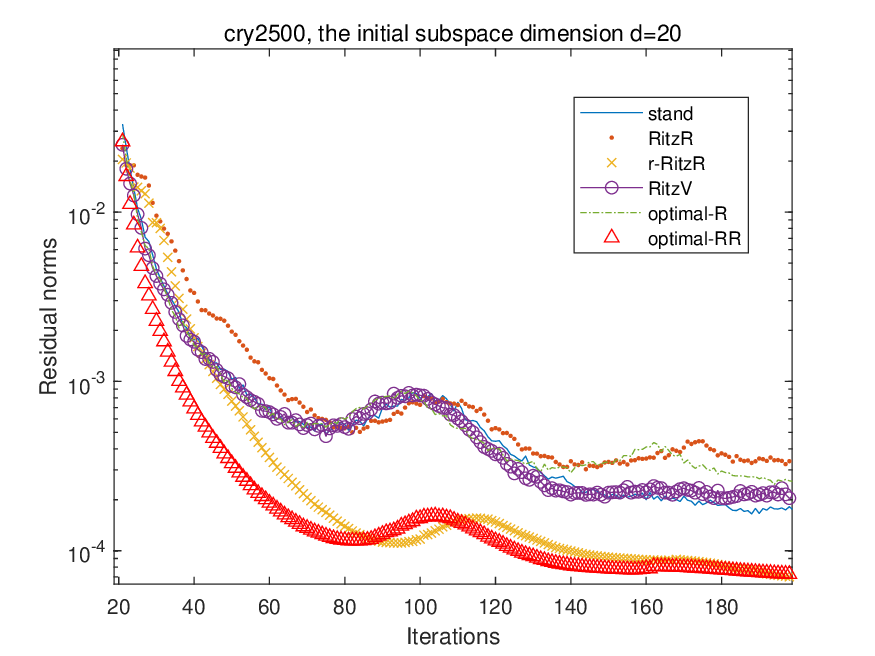}}
  \centerline{(b)}
\end{minipage}
\caption{The unsymmetric problem.} \label{unsymm}
\end{figure}

We now make several comments on the figures. Regarding
$\sin\angle(\mathcal{V}_k,x)$ for
different kinds of subspaces $\mathcal{V}_k,\ k=d+1,\ldots,m$,
we clearly observe that the optimal subspace expansion
approach ``optimal" is always the best, the computable optimal subspace
expansion approach ``r-RitzR" is the second
best, the computable optimal subspace approach ``RitzR" is the third most effective,
as is seen from Figure~\ref{symm}c and Figure~\ref{unsymm}a. They are all more
effective than the Ritz subspace expansion approach ``RitzV" and the standard
subspace subspace approach ``stand". The latter two ones
themselves win each other for the symmetric
matrices, as indicated by Figure~\ref{symm}a and Figure~\ref{symm}c,
but ``RitzV" is not as good as ``stand" for the unsymmetric matrix {\sf cry2500}.
More precisely, the computable optimal subspace expansion approach ``RitzR" is better
than ``stand" and ``RitzV" for the unsymmetric matrix and the symmetric
Strak\v{o}s matrix, but behaves very like ``RitzV" for the symmetric matrix
$A=\diag(1,1/2,\ldots,1/10000)$. On the other hand, as the figures indicate,
the three test problems are quite difficult as the desired $\lambda$ is clustered with some other
eigenvalues for each problem, causing $\sin\angle(\mathcal{V}_k,x)$ to
decay very slowly for the five subspace expansion approaches including the
optimal one. To summarize, the effectiveness of subspace expansion
approaches is, in turn, ``optimal", ``r-RitzR" and  ``RitzR",
and they are better than ``RitzV" and ``stand"; as for ``RitzV" and ``stand",
there is essentially no winner.

In contrast, the residual norms of the approximate eigenpairs exhibit more
complicated features. Note that  ``stand", ``RitzR", ``RitzV", and
``optimal-R" all use the standard Rayleigh--Ritz method on respective
subspaces $\mathcal{V}_{k+1},\ k=d,\ldots,m-1$,
to compute the Ritz approximations of $(\lambda,x)$. As we have commented in
the above and observed from the figures, $\sin\angle(\mathcal{V}_k,x)$
decays slowly for all the subspace expansion approaches, and
the residual norms of approximate
eigenpairs obtained by ``stand", ``RitzR", ``RitzV" and ``optimal-R" decrease
in a similar pattern.

However, the situation changes dramatically for ``r-RitzR"
and ``optimal-RR". First, ``r-RitzR" computes considerably
more accurate approximate eigenpairs than the afore-mentioned four methods
at each iteration. Second, ``r-RitzR" behaves very like ``optimal-RR",
the {\em ideally optimal} one,
meaning that the computable optimal ``r-RitzR" almost computes
best eigenpair approximations of $(\lambda,x)$ at each iteration. Third,
``optimal-RR" outperforms ``optimal-R" substantially, and converges
much faster than the latter, though they work on the same subspace at each
iteration. This indicates that the refined Rayleigh--Ritz method can
outperform the standard Rayleigh--Ritz method on the same subspace considerably.

In summary, our experiments on the three test problems have demonstrated that
``r-RitzR" is the best among the computable subspace expansion approaches,
and the refined Rayleigh--Ritz method on the resulting generated subspaces
behaves very like ``optimal-RR", i.e., the theoretical optimal
subspace expansion approach.

\section{Conclusions}\label{concl}

We have considered the optimal subspace expansion problem for
the general eigenvalue problem, and generalized the maximization
characterization of $\cos\angle(\mathcal{V}_w,x)$, one of Ye's two main results,
to the general case. Furthermore, we
have found the solution $b_w$ to the maximization characterization problem
of $\cos\angle(\mathcal{V}_w,x)$. Based on the expression of $b_w$, we
have analyzed the Ritz expansion approach, i.e., the RA method and argued that
the Ritz vector may not be a good approximation to the optimal
expansion vector $w_{opt}$.

Most importantly, we have
established a few results on $w_{opt}$,
$(I-P_V)Aw_{opt}$, the theoretical optimal subspace
$\mathcal{V}_{w_{opt}}$,
and $\cos\angle(\mathcal{V}_{w_{opt}},x)$ for $A$ general
and given their explicit expressions.
We have analyzed the results
in depth and obtained computable optimal replacements of
$(I-P_V)Aw_{opt}$ and
$\mathcal{V}_{w_{opt}}$ within the framework of the standard,
refined, harmonic and refined harmonic Rayleigh--Ritz methods.
Taking the standard Rayleigh--Ritz method as an example,
we have given implementation details on how to obtain
the computable optimally expanded subspace
$\mathcal{V}_{\widetilde{w}_{opt}}$ and made a cost comparison with
the subspace expansion approach with some other
expansion approaches such as the Lanczos or Arnoldi type
expansion approach with
$w=v_k$, the last column of $V_k$, the Ritz expansion approach, i.e.,
the RA method, with $w$ being the Ritz vector of $A$ from $\mathcal{V}_k$.

Our numerical experiments have
demonstrated the advantages of the computable optimal expansion
approaches over the afore-mentioned expansion approaches. Particularly, we have
observed that the computable optimal subspace expansion approach
using the refined Ritz vector of $A$ from ${\rm span}\{R_k\}$
behaves very like the theoretical optimal expansion approach
since the eigenpair approximations obtained by the former are almost as accurate
as those computed by the latter. This indicates that such computable optimal
expansion approach is as good as the theoretical optimal one and is thus
very promising when
measuring the accuracy of approximate eigenpairs in terms of residual norms.

We have also proposed a potentially more robust subspace expansion approach
that expanding $\mathcal{V}$ to $\mathcal{V}\oplus{\rm span}\{R\}$ and
computing approximate eigenpairs with respect to $\mathcal{V}\oplus{\rm span}\{R\}$
in the next iteration. For the sake of length and focus, we do not
further probe this approach in the current paper and leave it as future work.

It is well known that the harmonic Rayleigh--Ritz method is more suitable
to compute interior eigenvalues and their associated eigenvectors
than the standard Rayleigh--Ritz method
\cite{bai2000templates,stewart2001eigensystems,vandervorst2002eigenvalue}.
Furthermore, since the standard and harmonic Rayleigh--Ritz methods
may have convergence problems
when computing eigenvectors \cite{jia05,jia2001analysis},
we may gain much when using the refined Rayleigh--Ritz method
\cite{jia97,jia2001analysis} and the refined
harmonic Rayleigh--Ritz method \cite{jia05,jiali2015} correspondingly.
As a matter of fact, for a sufficiently accurate subspace
$\mathcal{V}$, the Ritz vector and the harmonic Ritz vector may be
poor approximations to $x$ and can deviate from $x$
arbitrarily (cf. \cite[pp. 284-5]{stewart2001eigensystems}
and \cite[Example 2]{jia05}), but the
refined Ritz vector and the refined harmonic Ritz vector are always
excellent approximations to $x$
\cite[pp. 291]{stewart2001eigensystems} and \cite[Example 2]{jia05},
provided that the selected approximate eigenvalues converge
to the desired eigenvalue $\lambda$ correctly. The advantages of the refined
Rayleigh--Ritz method over the standard Rayleigh--Ritz method have also
been justified by the numerical experiments in the last section.
Therefore, it is preferable to expand the subspace using
the refined Ritz vector and the refined harmonic Ritz vector
from ${\rm span}\{R_k\}$ when exterior and interior eigenpairs of $A$ are required,
respectively.

Based on the results in Section 3 and our
numerical experiments, starting with a $k$-dimensional non-Krylov subspace
with the residual $R_k$ defined by (\ref{Res}) and $\rank(R_k)\geq 2$, it is promising
to develop four kinds of more practical restarted
projection algorithms using computable
optimal subspace expansion approaches.

As it is known, there are remarkable distinctions between the Arnoldi type
expansion with $w=v_k$ and the RA method \cite{lee2007residual,leestewart07},
i.e., the Ritz expansion approach \cite{wu2014,ye2008optimal},
or the refined RA method, i.e., the refined Ritz expansion approach
\cite{wu2014} as well as between
SIRA or JD type methods and inexact SIA type methods:
A large perturbation in $Aw$ of RA type methods or in $Bw$
of SIRA and JD type methods is allowed \cite{jiali2014,jiali2015,voss},
while $Av_k$ in Arnoldi type methods or $Bv_k$ in SIA type methods must be
computed with high accuracy until the approximate
eigenpairs start to converge, and afterwards its solution accuracy
can be relaxed in a manner {\em inversely proportional} to
the residual norms of approximate
eigenpairs \cite{simonci2002,simonci2003}.
Keep these in mind. There will be a lot of important problems
and issues that deserve considerations. For example, how large errors are
allowed in $QQ^Hx/\|QQ^Hx\|$ when effectively expanding the subspace,
where the columns of $Q$ form an orthonormal basis of ${\rm span}\{R_k\}$?
This can be transformed into the problem of how large errors are
allowed in the residual $R_k$. In practical computations, this problem becomes
the one of how large errors are allowed in those computable optimal
replacements of $QQ^Hx/\|QQ^Hx\|$ within the framework of
the four kinds of projection methods, whenever perturbation errors exist
in the computable optimal replacements of $QQ^Hx/\|QQ^Hx\|$.
Is it possible to introduce the computable optimal expansion
approaches into JD and SIRA type methods?
All these need systematic exploration and analysis, and will
constitute our future work.

\section*{Acknowledgment}

I thank two referees and Professor Marlis Hochbruck very much for their numerous
valuable suggestions and comments, which inspired me to ponder the topic and
improve the presentation substantially.

\bibliographystyle{siam}

\end{document}